\documentclass[reqno,10pt]{amsart}
 
\usepackage[utf8]{inputenc}
\usepackage[T2A]{fontenc}
\usepackage[inline]{enumitem}

\usepackage{amsmath,amsthm,amsfonts, bbm, amssymb, hyperref,graphicx,color}
\usepackage{caption, subcaption}
\def\[#1\]{\begin{align}#1\end{align}}
\def\(#1\){\begin{align*}#1\end{align*}}

\newcommand{\ii}{\mathbbm{i}}

\renewcommand{\C}{\mathbb{C}}

\newcommand{\Hyp}{\mathbb{H}}

\newcommand{\R}{\mathbb{R}}
\newcommand{\Z}{\mathbb{Z}}
\newcommand{\N}{\mathbb{N}}

\newcommand{\kk}{\mathbbm{k}}
\newcommand{\jj}{\mathbbm{j}}

\newcommand{\height}{\operatorname{ht}}

\newcommand{\id}{\operatorname{id}}
\newcommand{\X}{\mathbb X}
\newcommand{\horoheight}{\operatorname{ht}}

\newcommand{\norm}[1]{\left\vert #1 \right \vert}	
\newcommand{\Norm}[1]{\left\Vert #1 \right \Vert}
\newcommand{\round}[1]{\left[#1\right]}		
\newcommand{\floor}[1]{\left\lfloor #1 \right\rfloor}

\renewcommand{\Re}{\text{Re}}
\renewcommand{\Im}{\text{Im}}

\newcommand{\Isom}{\text{Isom}}
\newcommand{\Stab}{\text{Stab}}

\newcommand{\rad}{\operatorname{rad}}

\newcommand{\comment}[1]{}

\newcommand{\ignore}[1]{}

\newtheorem{thm}{Theorem}

\newtheorem{prop}[thm]{Proposition}
\newtheorem{lemma}[thm]{Lemma}
\newtheorem{cor}[thm]{Corollary}

\numberwithin{thm}{section}

\theoremstyle{definition}

\theoremstyle{definition}
\newtheorem{example}[thm]{Example}
\newtheorem{remark}[thm]{Remark}

\numberwithin{equation}{section}

\newcommand{\Mod}{\mathcal M}

\newcommand{\Zee}{\mathcal Z}

\newcommand{\Sph}{\mathbb S}

\title[Lagrange's Theorem]{A geometric proof of Lagrange's theorem for continued fractions}
\author[A. Lukyanenko]{Anton Lukyanenko}
\address{
Department of Mathematics\\
George Mason University\\
4400 University Drive, MS: 3F2\\
Fairfax, Virginia 22030}
\email{alukyane@gmu.edu}
\author[J. Vandehey]{Joseph Vandehey}
\address{
Department of Mathematics\\
University of Texas at Tyler\\
Tyler, TX 75799
}
\email{jvandehey@uttyler.edu}

\subjclass[2020]{11K50, 11R52, 20F67  }
\keywords{Continued fractions, complex continued fractions, quaternions, octonions, Iwasawa continued fractions, Lagrange's Theorem}

\begin{document}

\begin{abstract}
    For regular continued fractions (CFs), points with finite expansions are exactly the rationals and, by Lagrange's theorem, points with eventually-periodic expansions are exactly the roots of non-degenerate quadratic equations with integer coefficients. We extend both results to proper and discrete Iwasawa CFs, including real, complex, 3D, quaternionic, octonionic, and Heisenberg CFs. Namely, the following three conditions are equivalent for a point $p$: $p$ has a finite expansion, $p\in \mathcal M(\infty)$ for the appropriate modular group $\mathcal M$, and $p$ is a fixed point of a parabolic transformation in $\mathcal M$. Eventually-periodic points correspond exactly to fixed points of \emph{loxodromic} elements of $\mathcal M$, which can be interpreted as roots of non-degenerate quadratics using the Clifford Algebra formalism of Ahlfors. In particular, this provides a new geometric proof of Lagrange's theorem for nearest-integer real CFs and Hurwitz complex CFs. Lastly, we comment on generalizations of the identity $\ii+1/\ii=0$.
\end{abstract}

\maketitle
\section{Introduction}
\label{sec:intro}

A point $x\in [0,1]$ has a finite regular continued fraction (CF) expansion exactly if it is rational, and an infinite (eventually-)periodic CF expansion exactly if it is a quadratic surd---that is, the root of a non-degenerate quadratic polynomial with integer coefficients. The former is a consequence of the Euclidean algorithm, while the latter is the content of Euler's Theorem and Lagrange's Theorem. Both results extend readily to other CF algorithms over the real numbers, including nearest-integer CFs and $\alpha$-CFs \cite{beltz2014periodicity}, as well as to certain complex CFs \cite{Dani2015}, but become more difficult for higher-dimensional CFs. Indeed, beyond the complex case, it is already non-obvious what the proper generalization of a quadratic surd should be. In \cite{vandehey2015lagrange}, the generalization for Heisenberg CFs was solutions to a class of quadratic forms.

In this paper, we will take a geometric perspective. In the reals, $x\in \hat \R$ is rational if and only if it is fixed by a non-trivial matrix $M\in SL(2,\Z)$ satisfying $|\operatorname{tr} M|=2$, called a \emph{parabolic matrix}, and acting on $\hat \R$ via M\"{o}bius transformations as follows:
\(
\begin{pmatrix}a & b \\ c & d \end{pmatrix}z= \frac{az+b}{cz+d}.
\)Similarly, $x\in \hat \R$ is a quadratic surd if and only if it is fixed point by a matrix $M\in SL(2,\Z)$ satisfying $|\operatorname{tr} M|>2$, called a \emph{loxodromic matrix}, see Theorem \ref{thm:backwards} and Remark \ref{remark:loxodromic}. In a general setting, we will show that having a finite CF expansion is equivalent to being a fixed point of a parabolic element of a generalized modular group, and having an eventually periodic CF expansion is equivalent to being a fixed point of a loxodromic element of the same group. The setting we will study is that of \emph{proper and discrete} Iwasawa CF expansions \cite{lukyanenko_vandehey_2022}, which includes as special cases the nearest-integer real CFs, $\alpha$-CFs for $\alpha\notin \{0,1\}$, and multiple infinite families of CFs over the complex numbers, $\R^3$, quaternions, and the Heisenberg group.

We will provide a full description of Iwasawa CF expansions in Section \ref{sec:background}; however, for now it suffices to know that $\X$ is an ambient space, $\Zee$ is a lattice of isometries of $\X$ that will form the digits of our CF expansion, $K\subset \X$ is a fundamental domain for $\Zee$, and $\iota$ is an inversion on $\X$. The Iwasawa CF is discrete if the modular group $\Mod=\langle\Zee, \iota\rangle$ is discrete, and it is proper if the closure of $K$ is contained in the \emph{open} unit ball centered at the origin (so that $\iota$ is a uniformly expanding map on $K$). 

The forward-shift map $T:K\to K$ acts on non-zero elements by $Tx=[\iota x]^{-1}\iota x $ where $[\iota x]\in \Zee$ is the unique element such that $[\iota x]^{-1}\iota x\in K$. As such, the action of $T$ on $x$ can be represented by an element of $\Mod$ in a natural way. Note that in most contexts we think of $\Zee$ as being additive, and so write this instead as $Tx= \iota x - [\iota x]$. The CF expansion of $x\in K$ is, then, the sequence of \emph{digits} $a_i = [\iota T^{i-1} x]$. If the event that $T^i x=0$, the digit $a_{i}$ and subsequent digits are undefined and the expansion is finite. If the sequence of $a_i$'s is eventually periodic, which is equivalent to $T^i x = T^j x$ for some $i\neq j$, then we say that the expansion is eventually periodic.

Our main result is as follows:

\begin{thm}
\label{thm:generalmain}
Let $(\X, \Zee, \iota, K)$ be a proper and discrete Iwasawa CF. Then $x\in X$ has a finite expansion if and only if it is a fixed point of a parabolic element of $\Mod=\langle\Zee, \iota\rangle$, and has an eventually-periodic expansion if and only if it is a fixed point of a loxodromic element of $\Mod$.
\end{thm}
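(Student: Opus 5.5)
We will prove the four implications separately, using the dynamics of $T$ encoded by elements of $\Mod$ together with the hyperbolic geometry of $\Mod$ acting on the hyperbolic space whose boundary is $\X$ (with $\infty$ adjoined). The basic bookkeeping fact is this: if $x$ has digits $a_1,\dots,a_n$ and $T^n x$ is defined, then $x = g_n(T^n x)$, where $g_n = \iota a_1 \iota a_2\cdots \iota a_n\in\Mod$ (writing $\Zee$ multiplicatively). We will also use properness, namely $\overline K\subset B(0,r)$ with $r<1$, so that $\iota$ expands uniformly on $K$ and the maps $g_n$ contract: the cylinder $g_n(K)$ has diameter $O(r^{n})$ (in an appropriate sense, e.g.\ via the metric derivative bounds for $\iota$ off the unit ball).

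\emph{Finite $\Rightarrow$ parabolic.} If $T^n x=0$, then $x=g_n(0)=g_n\iota(\infty)$. Now $\infty$ is fixed by every nontrivial $z\in\Zee$, and $\Zee$ acts as a lattice on $\X$, so it consists of parabolics (Heisenberg or Euclidean translations). Hence $x$ is fixed by the parabolic $h z h^{-1}$ with $h=g_n\iota$.

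\emph{Loxodromic $\Rightarrow$ eventually periodic.} This is the pure-periodic direction run backwards: if $T^{i}x=T^{i+k}x=y$, then $y=g(y)$ for $g=(\text{digits }a_{i+1},\dots,a_{i+k})$ composed as above. Properness makes $g$ a strict contraction on $\overline K$, so $g$ has an attracting fixed point in $K$ and is therefore not elliptic or parabolic; it must be loxodromic. Then $x$ is fixed by the conjugate $g_i g g_i^{-1}$. (This handles eventually periodic $\Rightarrow$ loxodromic.)

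\emph{Converse directions.} These are the real content. Given $x$ fixed by a parabolic or loxodromic $\gamma\in\Mod$, we will run the expansion of $x$ and consider the orbit points $T^n x$. The key step is a geometric ``bounded distance'' lemma, which we would prove first. Let $\sigma$ be the geodesic ray (vertical line) from $\infty$ down to $x$, for $x\in K$. The words $g_n^{-1}$ move a fixed basepoint $o$ along a path that stays within bounded hyperbolic distance $C$ of $\sigma$; this should follow from properness, since each step $\iota a_{n}$ moves the basepoint a bounded amount and the contraction of cylinders forces the sequence to track $x$. In the \emph{parabolic} case, $x=h(\infty)$ for some $h\in\Mod$ by discreteness (a parabolic fixed point of a discrete group with cusp at $\infty$ is a cusp point, via a Shimizu/Leutbecher-type argument). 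Then $\sigma$ eventually enters the horoball at $x$, and we would show that the tracking path $g_n(o)$ cannot go arbitrarily deep into a horoball that is $\Mod$-equivalent to the one at $\infty$ unless the expansion terminates. Concretely, deep points correspond to $|T^n x|$ being tiny but nonzero, which contradicts discreteness of $\Mod(\infty)$ near $0$ combined with the expansion bound. In the \emph{loxodromic} case, $\sigma$ is asymptotic to the axis of $\gamma$, and $\gamma^{m}$ translates along that axis. Hence the points $g_n^{-1}$, modulo the finitely many group elements of bounded displacement (discreteness), together with the bounded-distance lemma imply that the pairs $(T^n x,\text{local data})$ take finitely many values up to the action of $\langle\gamma\rangle$. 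Consequently $T^n x$ lies in a finite set, so $T^i x=T^j x$ for some $i\neq j$.

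The main obstacle is the loxodromic converse, specifically turning ``bounded distance to the axis'' into finiteness of $\{T^n x\}$. The plan is as follows. Write $T^n x=g_n^{-1}x$. Note that $g_n^{-1}\gamma g_n$ fixes $T^n x$ and has axis passing within distance $C'$ of $o$, with the same translation length. By discreteness, only finitely many elements of $\Mod$ have this property, so only finitely many fixed points are possible for $T^n x$. Applying the pigeonhole principle then yields periodicity. The delicate part is justifying the tracking constant $C$ uniformly, and handling finiteness of the expansion in the parabolic case in the same framework.
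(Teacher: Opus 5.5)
Your easy directions (finite $\Rightarrow$ parabolic, eventually periodic $\Rightarrow$ loxodromic) match the paper. Parabolic $\Rightarrow$ finite, however, has a genuine gap.

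You assert that a parabolic fixed point $x$ lies in $\Mod(\infty)$ ``by discreteness''. This is false even for lattices containing $z\mapsto z+1$. In $\Gamma_0(2)\subset SL(2,\Z)$ the point $0$ is fixed by the parabolic $\left(\begin{smallmatrix}1&0\\2&1\end{smallmatrix}\right)$, yet $0\notin\Gamma_0(2)(\infty)$, because every $g(\infty)$ has the form $a/c$ with $\gcd(a,c)=1$ and $c$ even. Bianchi groups of class number greater than one behave the same way. Shimizu-type arguments only show that the $\Mod$-translates of a high horoball at $\infty$ are disjoint; they say nothing about parabolic points outside $\Mod(\infty)$. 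That there are none is precisely (1)$\Rightarrow$(2) of Lemma \ref{lemma:rationalPoints}, and properness must be used to get it.

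Your concrete contradiction, via ``discreteness of $\Mod(\infty)$ near $0$'', also fails: $\Mod(\infty)$ is typically dense (it is $\mathbb{Q}\cup\{\infty\}$ for real CFs). The missing ingredient is a $\Mod$-invariant family of pairwise disjoint horoballs based at \emph{every} parabolic fixed point. This is Lemma \ref{lemma:horoballs}, which the paper takes from Bowditch: $\Mod$ is a lattice, hence geometrically finite, so each parabolic point is bounded parabolic. With this family no tracking lemma is needed. Normalizing into $K$ and inverting multiplies the horoheight of the horoball at $T^ip$ by $|T^ip|^{-1}>\rad(K)^{-1}$. So this horoball would eventually meet the one at $\infty$ unless $T^ip=0$.

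In loxodromic $\Rightarrow$ eventually periodic, your closing step is sound and somewhat cleaner than the paper's. The conjugates $g_n^{-1}\gamma g_n$ all have translation length $\ell$, so if their axes pass within $C'$ of $o$ they move $o$ by at most $2C'+\ell$. Lemma \ref{lemma:properaction} then leaves finitely many such elements, and pigeonhole on their fixed points finishes without the paper's bookkeeping with $M_{n_i}M^{j_i}$.

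The problem is the input: that the image under $g_n^{-1}$ of the axis of $\gamma$ (the geodesic from $T^nx$ to $T^nx'$) passes near $o$ for infinitely many $n$. You defer this to the tracking lemma, and the justification you offer fails: the digit set is infinite, so $d(o,\iota a_n o)$ is unbounded. The paper gets this input directly from \eqref{eq:magicinversion}. Whenever $|T^{n-1}x'|\le\rad(K)^{-1/2}$, one has $d(T^nx,T^nx')\ge\rad(K)^{-1/2}\,d(T^{n-1}x,T^{n-1}x')$, while $T^nx$ stays in $K$. Hence $(T^nx,T^nx')$ is widely spaced for infinitely many $n$, and Lemma \ref{lemma:horoheightminimum} then sends the axis through the fixed compact set $\Sph'$. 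Apply this estimate to $x'$ itself; routing through the ray $\sigma$ would additionally require showing $g_n(o)\to x$.

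You must also exclude a finite expansion of $x$, since otherwise the sequence $T^nx$ stops. The paper does this via Corollary \ref{cor:LoxodromicIsNotParabolic} and Lemma \ref{lemma:rationalPoints}.

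Finally, in the periodic direction $g$ is only contracting near $y$, not on all of $\overline K$. Branches of $T$ need not be full, so $g$ need not map $\overline K$ into itself. The local estimate the paper uses is enough there.
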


\begin{example}
Consider the data $\X=\R$, $\Zee=\Z$, $\iota(x)=-1/x$, and $K=[-1/2,1/2)$, giving the backwards nearest-integer CFs. The associated modular group $\Mod$ is the familiar $SL(2,\Z)$, since both translations and the inversion can be written as matrices in $SL(2,\Z)$. This CF is \emph{discrete} because $\Mod$ is a discrete subgroup of $SL(2,\R)=\Isom(\Hyp^2_\R)$. The CF is \emph{proper} because the closure of the region $K$ is contained in the open interval $(-1,1)$, so that $\iota$ is a uniformly expanding mapping on $K$.
\end{example}

Theorem \ref{thm:generalmain} fails without the properness assumption:
\begin{example}
For negative variants of even CFs (that is, $\X=\R$, $\iota(z)=-1/z$, $\Zee = 2\Z$) and the J.Hurwitz CFs (that is, $\X=\C$, $\iota(z)=-1/z$, $\Zee = \langle 1\pm \ii\rangle $), one has $1\notin \Gamma(\infty)$. Furthermore, $-1=[\overline 2]$, a repeating expansion giving a parabolic symmetry $x\mapsto \frac{-1}{2+x}$.
\end{example}

\begin{example}
The phenomenon of periodic expansions for non-loxodromic fixed points was studied by Schmidt and Sheingorn \cite{schmidt1995length} for a variant of the Rosen continued fractions called the $n\lambda F$ continued fractions, given by $\X=\R$, $\iota(x)=-1/x$, $\Zee = \mathbb{Z}\lambda_q$, and $K=(0,\lambda_q]$.  They show that a fixed point of a parabolic element either has a finite expansion or is periodic with period $[2\lambda_q,\underbrace{\lambda_q,\dots \lambda_q}_{q-1\text{ times}}]$. All other periodic expansions are fixed points of loxodromic matrices (and vice-versa).
\end{example}

Further examples, including a discussion of the relationship between quadratic surds and fixed points of loxodromic matrices, can be found in Section \ref{sec:loxodromic}.

\subsection{The real case}
Let us now sketch the proof of Theorem \ref{thm:generalmain} in the case of nearest-integer CFs, without invoking the full machinery of Iwasawa CFs. We will comment on places in the proof that become more complicated in the general case. We will then comment on the comparison to the classical algebraic proof in Remark \ref{remark:comparison}.

\begin{proof}[Sketch of the proof of Theorem \ref{thm:generalmain} for nearest-integer CFs]
\newcommand{\parabolicmatrix}{\begin{pmatrix}1&1\\0&1\end{pmatrix}}
To see that finite CFs are parabolic fixed points, observe first that $\infty$ is a fixed point of the parabolic transformation $z\mapsto z+1$ associated to the matrix $\parabolicmatrix$. If $x$ has a finite CF expansion, then $T^n x=0$ for some $n$. Write $\iota T^n$ as a matrix $M\in SL(2,\Z)$, and observe that $x$ is a fixed point for $M^{-1}\parabolicmatrix M$, which is a conjugate of a parabolic matrix and hence also parabolic. This step generalizes immediately to the general case.

To see that parabolic fixed points have finite expansions, observe that such fixed points are solutions to degenerate quadratic polynomials, i.e., rational points. Now, associate to each rational point $p/q$ (in lowest terms) the associated Ford circle in the complex upper-half plane with radius $1/q^2$ tangent to the point $p/q$ on the real axis, including the ``circle at infinity'' consisting of the region $\{x+\ii y: y=1\}$. Working with the generators $z\mapsto z+a$ and $\iota$ of $\Mod$, one shows that the collection of all Ford circles is invariant under the associated modular group $\Mod=SL(2,\Z)$. Now, given a rational point $p/q\in (-1/2,1/2]$, consider the action of the map $T$ on the Ford circle sitting at $p/q$. One shows that inversion makes the circle bigger by a definite amount (properness is critical here), and translation preserves the radius of the circle.  Thus, iterating the map $T$ moves $p/q$ while also expanding its associated Ford circle by a definite multiplier. Because finite circles have a bounded diameter, they must eventually become the circle at infinity, and so the rational point must reach $\infty$. In the full proof of Theorem \ref{thm:generalmain}, we obtain generalized Ford circles from the fact that $\Mod$ is a lattice, and therefore has a horoball corresponding to every fixed point of a parabolic isometry. 

Next, consider an eventually-periodic point $x\in K=(-1/2,1/2]$, and assume for simplicity that it is purely periodic with period $n$. Then the mapping $T^n$ acting on $x$ is associated with a matrix $M\in SL(2,\Z)$. One concludes that $M$ is loxodromic because it is expanding near $x$: indeed, the generalized Gauss map is a composition of (isometric) translations and inversions $\iota$ that are expanding near points in $K$. Note that over $\R$, loxodromic fixed points correspond exactly to solutions of non-degenerate integer quadratics, Theorem \ref{thm:backwards}.

The last part of the proof relies on a lemma about the action of the modular group on hyperbolic space: for $K_1, K_2 \subset \Hyp$ compact, the set $\{g\in \Mod: gK_1\cap K_2 \neq \emptyset\}$ is finite. To prove this, identify a point $x+\ii y \in \Hyp$ with the matrix $f(x+\ii y)=\begin{pmatrix} y &x \\ 0 &1 \end{pmatrix}\in SL(2,\R)$ which sends $\ii$ to $x+\ii y$. Noting that the stabilizer of $\ii \in \Hyp$ is compact, we have that the  set
$$\{g_1 g_2 g_3^{-1}: g_1 \in f(K_2), g_2 \in \Stab_{SL(2,\R)}(\ii), g_3\in  f(K_1)\}$$ 
is compact and consists of all isometries sending some point of $K_1$ to some point of $K_2$. Since the modular group $\Mod=SL(2,\Z)$ is discrete, it has only finitely many points in this compact set, as desired.

\begin{figure}[ht]
\caption{Geometric setup for the proof of Lagrange's theorem for nearest-integer CFs, under the simplifying assumption that $(x,x')$ are widely spaced, i.e. $|x|<0.5$ and $|x'|>\sqrt{2}$.}
\label{fig:keyargument}
\includegraphics[width=\textwidth]{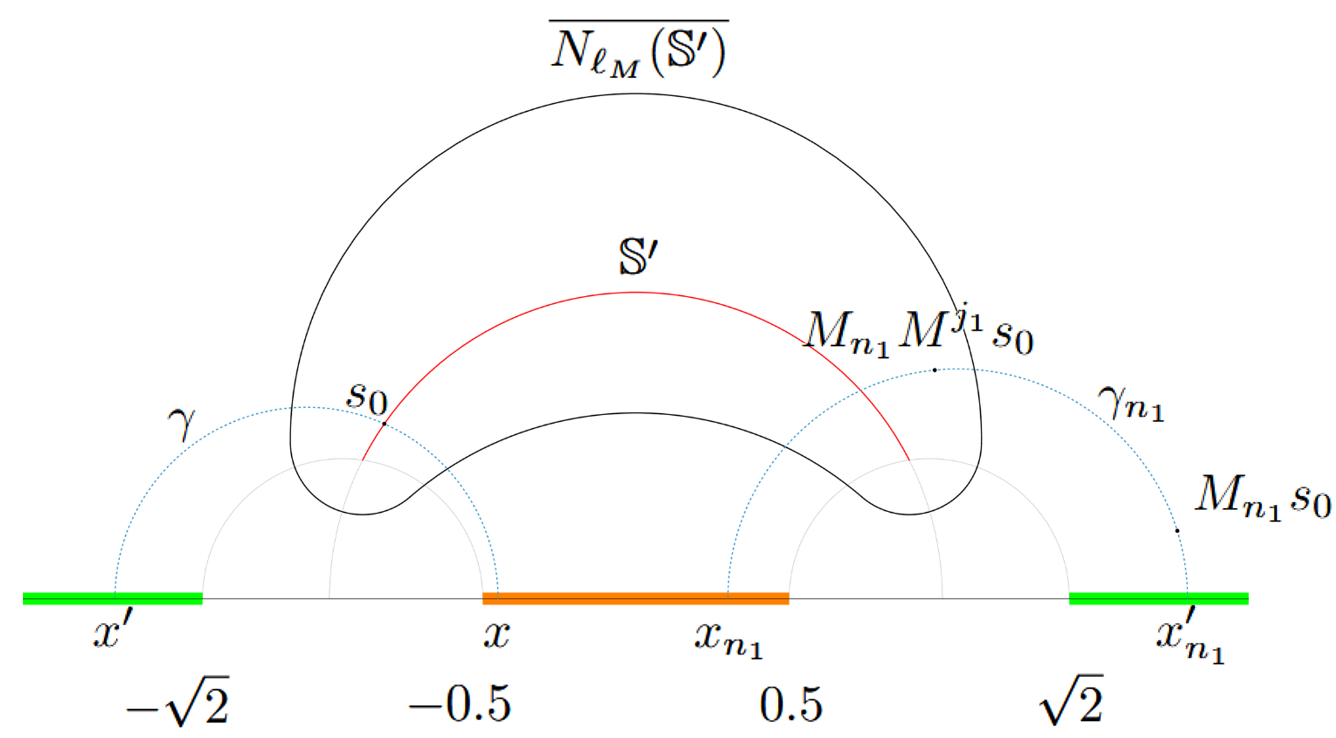}
\end{figure}

We will also need a geometric construction, see Figure \ref{fig:keyargument}. For the purposes of this proof, we will call a pair of points $(a,b)$ \emph{widely-spaced} if $|a|<1/2$ and $|b|>\sqrt{2}$. The geodesic joining these points in hyperbolic space passes through the unit circle at a point whose imaginary component is bounded below by $C=(1/7)\sqrt{3(9-4\sqrt{2})}>0$. Let $\Sph'=\{z\in \Sph : \Im(z)\geq C\}$. For any $\epsilon>0$, the closed hyperbolic neighborhood $\overline{N_\epsilon(\Sph')}$ of $\Sph'$ remains compact.

Now, let $M$ be a loxodromic matrix with distinct fixed points $x,x'\in \hat\R$ joined by a hyperbolic geodesic $\gamma$. Then, $M$ moves along $\gamma$ by some distance $\ell_M$. We assume that $x\in K$, and observe that $x$ has an infinite forward orbit under $T$: as we have shown, a finite orbit would imply that $x=M'\infty$ for some $M'\in SL(2,\Z)$, but then $M'MM'^{-1}$ would be a loxodromic element in $SL(2,\Z)$ fixing $\infty$, which is impossible since $M'MM'^{-1}\infty=\infty$ and $\det M'MM'^{-1}=1$ imply $|\operatorname{tr} M'MM'^{-1}|=2$. 

Let $M_n$ be the matrix associated to the $n$th iterate of the generalized Gauss map at $x$, so that $M_nx=T^n(x)$, and let $x_n = M_n x$, $x'_n = M_n x'$, and $\gamma_n=M_n\gamma$.  We first claim that, for some $n$ one has that $|x'_n|>\sqrt{2}$, so that the pair $(x,x')$ is widely spaced. For this, suppose that $|x'|<\sqrt{2}$ and let $a$ be the first digit of $x$. Using the inversion identity $|x||y||1/x-1/y|=|x-y|$ and the fact that $|x|\leq 1/2$, we have that $|x_1-x'_1|=|M_1(x)-M_1(x')| = |(1/x-a) - (1/x'-a)| = |x|^{-1}|x'|^{-1}|x-x'| \geq \sqrt{2} |x-x'|$.

Under further iteration, the points continue to repel each other until $x'$ is pushed out of the interval $(-\sqrt{2},\sqrt{2})$. Further iteration may bring the iterates of $x, x'$ back together, but it must eventually push them apart again, giving an infinite sequence $n_i$ such that for each $i\geq 0$ we have $|x'_{n_i}|>\sqrt{2}$. We assume, for simplicity, that $n_1=0$ and let $s_0=\gamma\cap \Sph \in \Sph'$. Now, for $i>1$, the point $M_{n_i}s_0$ may be far from $\Sph$, but we can move it along $\gamma_i$ using powers of our loxodromic mapping $M$, so that for some $j_i$ we have that $M_{n_i}M^{j_i}s_0\in \overline{N_{\ell_M}(\Sph')}$. We thus obtain infinitely many matrices $M_{n_i}M^{j_i}$ sending $s_0$ into $\overline{N_{\ell_M}(\Sph')}$. Because of the geometric lemma above, only finitely many such matrices exist, so that there are indices $i_1 \neq i_2$ such that $M_{n_{i_1}}M^{j_{i_1}}=M_{n_{i_2}}M^{j_{i_2}}$. Since $M$ fixes $x$, this implies $M_{n_{i_1}}x=M_{n_{i_2}}x$ so that the orbit of $x$ under the generalized Gauss map is eventually periodic, and thus the digits of $x$ are also eventually periodic, as desired.
\end{proof}

\begin{remark}
\label{remark:comparison}
The most difficult part of the above work is the proof of Lagrange's Theorem, which says that any quadratic surd has an eventually periodic CF expansion. There are many different proofs of this theorem already:
\begin{enumerate}
\item In one method (see \cite{northshield2011short}), the minimal integer polynomials for $T^n x$ are proven to all have the same discriminant. The coefficients of these polynomials are proven to be bounded, showing that at some point the polynomials (and hence the values of $T^n x$) must repeat. This is also the method used by \cite{Dani2015}.
\item Another method (see \cite[Ch.~2]{borwein2014neverending}) uses the same first step as the previous method, but then proves that there are finitely many elements in $\mathbb{Q}(\sqrt{d})$ whose minimal polynomial has a given discriminant.
\item In a third method (see \cite[Thm.~1.2]{HensleyBook}), it is shown that each $T^n x$ can be put into the form $\frac{a+\sqrt{d}}{b}$ where $b|(a^2-d)$. This relation restricts the possible values of $T^n x$ to a finite set.
\end{enumerate}
Many of the methods above implicitly make use of the fact that $\Z$ is an ordered discrete set to prove finiteness, and even Dani's work over the complex numbers uses that $\C$ is a commutative ring with a multiplicative norm that respects the triangle inequality. The method given in this paper neatly avoids requiring any assumptions about ring structure or order because finiteness is a direct result of the discreteness of the modular group. As a result, our results extend to spaces without such structures, including $\R^3$ and the Heisenberg group.
\end{remark}

\subsection{Organization}
We define Iwasawa CFs and describe the associated geometry in \S  \ref{sec:Iwasawa}. We then prove the main theorem in \S \ref{sec:mainproof}. We then focus on characterizing fixed points of loxodromic elements in \S \ref{sec:loxodromic}. In \ref{sec:backwardscompatability}, we show that loxodromic fixed points for $\R$ and $\C$ are exactly the solutions of non-degenerate quadratics. We then provide the framework in Clifford matrices in \S \ref{sec:Clifford}, with applications to 3D and 4D (quaternionic) CFs in \S  \ref{sec:R3} and \ref{sec:R4}, respectively. We finish in \S \ref{sec:identities} by remarking on some curious identities for quaternions, generalizing the continued fraction identity $\ii+1/\ii=0$.

\subsection{Acknowledgements}
The research was sponsored in part by the first author's Simons Foundation grant MPS-TSM: ``Geometry and dynamics of higher-dimensional continued fractions''.

\section{Geometry of Iwasawa CFs}\label{sec:background}
\label{sec:Iwasawa}

We work with the Iwasawa continued fraction framework \cite{lukyanenko_vandehey_2022}, defined as follows. 

Fix a real associative division algebra $k\in \{\R, \C, \text{Quaternions}\}$ and $n\in \mathbb N$. The Iwasawa inversion space $\X=\X^n_k$ is modeled on $k^{n}\times \Im(k)$ with group law $(z,t)*(z',t') = (z+z', t+t'+2\Im (\overline z\cdot z'))$ with identity $0=(0,0)$ and group inverse given by $(z,t)*(-z,-t)=(0,0)$. One gives $\X$ a left-invariant metric by choosing the gauge $\norm{(z,t)} = \sqrt[4]{\Norm{z}^4 + \Norm{t}^2}$ and setting $d((z,t),(z',t')) = \norm{(-z,-t)*(z',t')}$. The natural inversion on $\X$ is the \emph{Koranyi inversion}
$\iota_-(z,t)=\left(\frac{-z}{\Norm{z}^2+t},\frac{-t}{\Norm{\Norm{z}^2+t}^2}\right)$,
satisfying the identities
\begin{equation}\label{eq:magicinversion}\norm{\iota_-(p)}=\norm{p}^{-1},\hspace{1in}d(\iota_- p, \iota_- p') = \norm{p}^{-1}\norm{p'}^{-1}d(p,p').\end{equation}

The associated hyperbolic space $\Hyp$ (a negatively-curved rank-one symmetric space) in modeled in horospherical coordinates as $\X\times \R_+$. Identifying $\R$ with $\Re(k)$, we view a point in $\Hyp$ as a triple $(z,t,s)$ where $z\in k^n$, $t\in \Im k$, and $s\in \Re(k)$, furthermore writing $w=s+t\in k$ so that $(z,t,s)=(z,w)$ becomes a point in the upper half-space in $k^{n+1}$. We will refer to $\operatorname{ht}_\infty(z,t,s)=s$ as the \emph{horoheight\footnote{Due to different normalizations of the metric in the case $k=\R$ and $k\neq \R$, we have that for $k=\R$ the function $\operatorname{ht}_\infty$ is in fact the square of the horoheight from infinity, cf.~Remark \ref{remark:realHyperbolic}.} from infinity}. 

Consider the following group actions: an action of the group $\X$ via $(p,s)\mapsto (p_0*p, s)$, an action of $\R_+$ via $(p,s)\mapsto(\delta_rp, r^2 s)$ where $\delta_r(z,t)=(rz,r^2t)$, and the order-two extended Koranyi inversion $\iota_-(z,w) = \left(\frac{-z}{\Norm{z}^2+w}, \frac{\overline w}{\Norm{\Norm{z}^2+w}^2}\right)$. The gauge metric extends to $\Hyp$ via the gauge $\norm{(z,w)}=\Norm{\Norm{z^2}+w}^{1/2}$ and invariance under the action of $\X$, giving the Cygan metric $d_C$ on $\overline \Hyp$ that generalizes the Euclidean metric on the ambient space. The negatively-curved Riemannian metric on $\Hyp$, which we don't define here, is invariant under the actions of $\X$, $\R_+$, and the Koranyi inversion. According to the Iwasawa (or $KAN$) decomposition, the isometry group of $\Hyp$ can be written as the direct sum $\Isom(\Hyp)=KAN$, where $N=\X$, $A=\R_+$, and $K$ is a compact Lie group fixing the point $(0,1)\in \Hyp$. Note that we will use the variables $K, A, N$ in other ways when we are not talking about the Iwasawa decomposition, and hope that the use is clear from context.

\begin{remark}\label{remark:realHyperbolic}The simplest case is $k=\R$ and $n=2$, giving $\X=\R$ with the usual metric and an unusual model of the hyperbolic upper-half plane. Namely, $\Hyp$ is $\R\times\R_+$ with coordinates $(z,w)$, the extended Cygan metric characterized by $\norm{(z,w)}=\sqrt{z^2+w}$, and Koranyi inversion $\iota_-(z,w)=\left(\frac{-z}{z^2+w}, \frac{w}{z^2+w}\right)$. To convert to the Poincar\'e upper half-plane with the Euclidean metric and inversion $x+\ii y\mapsto \frac{-1}{x+\ii y}$, one takes $x+\ii y=z+\ii \sqrt{w}$.
\end{remark}

We will need the following well-known lemma concerning the action of discrete groups of isometries of $\Hyp$; we sketch the proof for completeness. 
\begin{lemma}
\label{lemma:properaction}
For $K_1, K_2 \subset \Hyp$ compact and $\mathcal M$ a discrete group of isometries of $\Hyp$, the set $\{g\in \Mod: gK_1\cap K_2 \neq \emptyset\}$ is finite.
\end{lemma}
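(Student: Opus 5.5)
We follow the strategy already sketched in the real case: show that the set of isometries carrying some point of $K_1$ into $K_2$ is a compact subset of the Lie group $G=\Isom(\Hyp)$, and then use discreteness of $\Mod$. Throughout, $G$ carries its usual topology of uniform convergence on compact sets. In this topology $G$ is a Lie group acting continuously and transitively on $\Hyp$, and $\Mod$ is discrete in $G$ by hypothesis.

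\emph{Step 1: a continuous section.} Fix the basepoint $o=(0,1)\in\Hyp$. Using the Iwasawa decomposition $\Isom(\Hyp)=KAN$, define $f:\Hyp\to NA\subset G$ by sending $(p,s)$, with $p\in\X$ and $s\in\R_+$, to the composition of the dilation $\delta_{\sqrt s}$ (acting as $(q,r)\mapsto(\delta_{\sqrt s}q, s r)$) with left translation by $p$. This map is continuous, since it is given by explicit formulas in the coordinates. It satisfies $f(x)o=x$, because the dilation carries $(0,1)$ to $(0,s)$ and the translation then carries $(0,s)$ to $(p,s)$. Let $S=\Stab_G(o)$, which is the compact factor in the Iwasawa decomposition.

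\emph{Step 2: compactness of the candidate set.} Suppose $g\in G$ and $x\in K_1$ satisfy $gx\in K_2$, and put $y=gx$. Then $f(y)^{-1}gf(x)$ fixes $o$, so it equals some $h\in S$, and hence $g=f(y)\,h\,f(x)^{-1}$. Therefore every such $g$ lies in
\[
C=f(K_2)\cdot S\cdot f(K_1)^{-1}.
\]
This set is the image of the compact set $K_2\times S\times K_1$ under a continuous map into $G$, so it is compact.

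\emph{Step 3: discreteness.} The group $\Mod$ is a discrete subgroup of the Hausdorff, locally compact group $G$, so it is closed. Hence $\Mod\cap C$ is a compact discrete set, which is finite. This finite set contains $\{g\in\Mod: gK_1\cap K_2\neq\emptyset\}$, proving the lemma.

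The main point needing care is the compactness of the stabilizer $S$, together with the fact that $G$ is a Lie group in which the map $(a,h,b)\mapsto ahb^{-1}$ is continuous. We take both as standard facts about rank-one symmetric spaces: $S$ is the maximal compact subgroup $K$ of the Iwasawa decomposition. Alternatively, compactness of $S$ follows from Arzel\`a--Ascoli, since $S$ consists of isometries fixing a point of a proper metric space.

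If one prefers to avoid Lie theory altogether, the lemma can also be proved directly. Suppose infinitely many distinct $g_i\in\Mod$ satisfy $g_iK_1\cap K_2\neq\emptyset$. These isometries move a fixed point of $K_1$ a bounded distance, so by Arzel\`a--Ascoli a subsequence converges uniformly on compacta to an isometry. The elements $g_i^{-1}g_{i+1}$ are then nontrivial and converge to the identity, which contradicts discreteness of $\Mod$.
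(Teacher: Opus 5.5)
Your proof is correct and follows the paper's argument. Both write every isometry carrying a point of $K_1$ into $K_2$ as $f(y)\,h\,f(x)^{-1}$, where $f$ is the $AN$-section and $h$ lies in the compact stabilizer of the basepoint, and then intersect this compact set with the discrete group $\Mod$; you are somewhat more careful than the paper in verifying this factorization and in noting that discrete subgroups are closed, so that $\Mod\cap C$ is compact (your Arzel\`a--Ascoli alternative is also sound).
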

\begin{proof}
The group $U(k, n,1)=\Isom(\Hyp)$ acting on $\Hyp=\Hyp^n_k$ has an Iwasawa decomposition $U(k, n,1)=KAN$. The groups $A$ and $N$ are described above: $A$ is isomorphic to $\R_+$ and acts on horospherical coordinates by generalized dilations, while $N$ is isomorphic to $\X$ and acts on horospherical coordinates by left multiplication in the first coordinate. The group $AN$ acts simply transitively on $\Hyp$. This gives a bijection from $AN$ to $\Hyp$ given by $an\mapsto an(0,1)$, whose inverse is then a diffeomorphism $f: \X \rightarrow AN$. Lastly, the group $K$ is isomorphic to the compact group $U(k,n-1)$, and accounts for rotations around the point $(0,1)$. One concludes that the set
$$\{g_1g_2g_3^{-1}: g_1\in f(K_2), g_2\in U(k,n-1), g_3\in f(K_1)\}$$
is compact, i.e., the action of $U(k,n,1)$ on $\Hyp$ is proper. Since $\Mod$ is discrete, it intersects the above set in a finite number of points, as desired.
\end{proof}

We will also need to see how geodesics in $\Hyp$ intersect the sphere $\Sph=\{(z,w):|(z,w)|=1\}$. We will use the following lemma to obtain the set $\Sph'$, see Figure \ref{fig:keyargument}.
\begin{lemma}
\label{lemma:horoheightminimum}
Let $0<\epsilon < 1 < \epsilon '$. Then there exists $h_0>0$ such that any geodesic $\gamma$ with endpoints $(a,b)$ satisfying $\norm{a}<\epsilon$ and $\norm{b}>\epsilon'$ intersects the gauge sphere $\Sph$ at points whose horoheight from infinity is bounded below by $h_0$.
\begin{proof}
The existence of the intersection with the gauge unit sphere is guaranteed by the intermediate value theorem.

Fix a smooth parametrization $\gamma_0: [0,\infty]\rightarrow \overline{\Hyp}$ of the geodesic joining the points $0$,$\infty$, e.g. $\gamma_0(t)=(0,0,t)$. We may vary the first endpoint inside the compact set $\overline{B(0,\epsilon)}$ by applying elements of the nilpotent group $N$. Likewise, we may move the second point by applying elements of $\iota_- N \iota_-$ where $\iota_-$ is the Koranyi inversion.  All together, this gives us a smooth mapping $\phi: [0,\infty]\times \overline{B(0,\epsilon)} \times \overline{(\hat \X\setminus B(0,\epsilon'))}\rightarrow \Hyp$ that parametrizes all points on all geodesics of interest, including ones on the boundary. Letting $\psi(t,a,b)=\norm{\phi(t,a,b)}$, we have that $\psi^{-1}(1)$ contains all the intersections with the sphere. This is a closed set, and therefore compact.

Now, the horoheight from infinity of $\psi(t,a,b)$ is equal to 0 if and only if $t\in \{0,\infty\}$, but geodesic endpoints lie in $B(0,\epsilon)$ or $\hat X \setminus B(0,\epsilon')$, neither of which intersect $\Sph$. Thus, horoheight is in fact positive for every point of $\psi^{-1}(1)$, and attains a minimum on $\psi^{-1}(1)$.
\end{proof}
\end{lemma}

Now, to define a particular continued fraction algorithm on $\X$, fix a discrete group $\Zee \subset \Isom(\X)$ acting on both $\X$ and $\Hyp$, a fundamental domain $K\subset \X$ for $\Zee$, contained in the closed unit ball, and an inversion $\iota = \mathcal O \circ \iota_-$. 
Based on the data $(k, n, \Zee, K, \iota)$, define:
\begin{enumerate}
\item The rounding function $\round{\cdot}_K: \X\rightarrow \Zee$ characterized by $\round{p}_K^{-1}(p)\in K$,
\item The generalized Gauss map $T: K\setminus \{0\}\rightarrow K$ defined by $T(p) = \round{\iota p}^{-1}(\iota p)$,
\item The CF digits $\{a_i(p)\}$ of a point $p\in K$, given by $a_i(p)=[\iota T^{i-1}p]$, ending with $i=n$ if $T^n p=0$,
\item Associated to a sequence of $a_i\subset \Zee$, the CF $[a_1,a_2,a_3,\dots, a_n]=a_1 \iota a_2 \iota \cdots a_n(0)$, taking a limit for infinite sequences,
\item The generalized modular group $\Mod=\langle \Zee,\iota\rangle$,
\item The radius of $K$: $\rad(K)=\sup\{\norm{p}: p\in K\}$,
\item The \emph{goalpost region} $\tilde K\subset \X\times \R_+=\Hyp$ consisting of points $(p,s)$ satisfying $p\in K$ and $d_C((p,s),(0,0))\geq 1$.
\end{enumerate}

\begin{example}
Regular CFs are recorded via $\X=\R, \iota(x)=1/x, K=[0,1)$ giving $[x]_K=\floor{x}$, and $\Zee=\Z$, with associated hyperbolic space $\Hyp=\Hyp^2_\R$ the real hyperbolic plane and  modular group $\Mod\cong SL(2,\Z)$. 
\end{example}
\begin{example}Hurwitz complex CFs are recorded with $\X=\R^2, \iota(x)=\frac{(x,-y)}{x^2+y^2}, K=[-1/2,1/2)\times [-1/2,1/2)$, and $\Zee=\Z[\ii]$, with associated hyperbolic space $\Hyp=\Hyp^3_\R$ and generalized modular group $\Mod\cong SL(2,\Z[\ii])$.
\end{example}

An Iwasawa CF is \emph{proper} if $\rad(K)<1$, in which case $\iota\vert_K$ is uniformly expanding.
The CF is \emph{discrete} if $\Mod$ is discrete. For discrete and proper Iwasawa CFs, the generalized modular group $\Mod$ is a non-uniform lattice, i.e., the quotient modular manifold $\Mod\backslash\Hyp$ has finite volume but is not compact. Indeed, one shows (see \cite{lukyanenko_vandehey_2022}) that any point in $\Hyp$ can be moved into $\tilde K$ using a sequence of translations (which don't affect $s$) and inversions (which increase $s$ when $p\in K$). In the case of nearest-integer real CFs, the goalpost region $\tilde K$ already gives the familiar fundamental domain for $SL(2,\Z)$. More generally, $\tilde K$ only \emph{contains} a fundamental domain for $\Mod$ at a finite index, see the discussion of hidden symmetries in \cite{lukyanenko_vandehey_2022}. Nonetheless, the properness assumption implies that $K$ has finite volume. 

For a point $(p,s)\in \X\times \R_+ = \Hyp$, one defines the horoheight from infinity as $\horoheight_\infty(p,s)=s$. Sets of the form $\{(p,s): s=s_0\}$ are \emph{horospheres at $\infty$}, and the sets of the form $\{(p,s): s\geq s_0\}$ are \emph{horoballs at $\infty$}. Horospheres (resp., horoballs) at other points are images of horospheres (resp., horoballs) under an arbitrary $g\in \Isom(\Hyp)$, and touch the boundary at the point $g(\infty)$. In particular, for a horoball $\mathcal B$, one has that $\sup \{\horoheight_\infty(p,s): (p,s)\in \mathcal B\}$ is infinite if and only if $\mathcal B$ is based at $\infty$. For $\Hyp=\Hyp^2_\R$, horospheres are circles or horizontal lines, and $\Gamma$-invariant collections of horospheres generalize Ford circles.

\begin{remark}
The stabilizer of infinity $\Stab_{\Isom(\Hyp)}(\infty)$ sends horospheres at $\infty$ to horospheres at $\infty$ and horoballs at $\infty$ to horoballs at $\infty$, so that one may speak intrinsically of horospheres and horoballs, but not of their height. In \cite{lukyanenko_vandehey_2022}, we were interested only in horoballs based at points of the form $g(\infty)$ for $g\in \Mod$. In that setting, $\Stab_\Mod(\infty)$ consists only of transformations that preserve horoheight at $\infty$, so one may define, for $g\in \Mod$, $\horoheight_{g(\infty)} (p,s) := \horoheight_\infty(g^{-1}(p,s))$. We will not use this definition in this paper.
\end{remark}

An isometry $g$ of a Gromov hyperbolic space $H$ falls into one of four mutually-exclusive types, see e.g.~\cite{DasSimmonsUrbanski}:
\begin{enumerate}
\item The identity.
\item Elliptic, if it is not the identity and has a fixed point in $H$. Any fixed points on the boundary are neither attracting nor repelling.
\item Parabolic, if it has exactly one fixed point in $\partial H$, which is neither attracting nor repelling,
\item Loxodromic, if it has exactly two fixed points in $\partial H$, one of which is attracting and the other repelling.
\end{enumerate}

For $\Hyp=\Hyp^{n+1}_k$ viewed in horospherical coordinates, one  can further normalize the parabolic and loxodromic isometries by conjugating by an element of $\Isom(\Hyp)$. A mapping $g$ is:
\begin{enumerate}
\item Parabolic if and only if it is equivalent to a mapping of the form $(p,s)\mapsto (p_0,0)*(p,s)$, with $p_0\neq 0$,
\item Loxodromic if and only if it is equivalent to a mapping of the form $(p,s)\mapsto O(\delta_r(p,s))$, where $\delta_r$ is a dilation by factor $r>0$ and $O$ is a rotation of $\X$ acting only on the first coordinate,
\item Hyperbolic if it is loxodromic and $O=\id$.
\end{enumerate}
In particular, a loxodromic isometry moves along the geodesic joining its fixed points on the boundary, while also twisting around it if the isometry is non-hyperbolic.

\begin{remark}
\label{remark:loxodromic}
For matrices in $SL(2,\R)=\Isom^+(\Hyp^2_\R)$, the conditions \emph{hyperbolic} and \emph{loxodromic} coincide, and \emph{hyperbolic} is the more familiar term in that context.
\end{remark}

\section{Proof of the Main Theorem}
\label{sec:mainproof}
We prove Theorem \ref{thm:generalmain} in two parts. First, we characterize the finite CFs:
\begin{lemma}
\label{lemma:rationalPoints}
Fix a proper real Iwasawa CF. Then the following are equivalent for a point $p\in \X$:
\begin{enumerate}
\item $p$ is a fixed point of a parabolic element in $\Mod$, 
\item $p\in \Mod(\infty)$,
\item $p$ has finitely many CF digits.
\end{enumerate}
\end{lemma}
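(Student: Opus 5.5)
I will prove the cycle of implications $(3)\Rightarrow(2)\Rightarrow(1)\Rightarrow(3)$. Throughout, $p\in K$ (for general $p\in\X$ one first applies an element of $\Zee$), and I write $M_n\in\Mod$ for the element realizing $T^n$ near $p$, namely $M_n=\round{\iota T^{n-1}p}^{-1}\iota\cdots\round{\iota p}^{-1}\iota$. This is a word in $\Zee$ and $\iota$, so $M_n\in\Mod$.

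\emph{Step $(3)\Rightarrow(2)$.} If $T^np=0$, then $M_np=0$ and $\iota(0)=\infty$. Hence $p=(\iota M_n)^{-1}(\infty)\in\Mod(\infty)$.

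\emph{Step $(2)\Rightarrow(1)$.} Since $\Zee$ is a lattice of translations fixing $\infty$, any nontrivial $z\in\Zee$ is parabolic with its unique fixed point at $\infty$. If $p=g(\infty)$ with $g\in\Mod$, then $gzg^{-1}\in\Mod$ is parabolic and fixes $p$.

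\emph{Step $(1)\Rightarrow(3)$.} This is the main obstacle, and I will do it in two substeps.

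First I show that $p\in\Mod(\infty)$. Suppose $h\in\Mod$ is parabolic with fixed point $p$. Because $\Mod$ is a lattice (properness plus discreteness, as recalled above), I will use the standard fact that every parabolic fixed point of a finite-covolume discrete group is a cusp point. Since $\tilde K$ has finite volume and its only point at infinity is $\infty$ (using $\rad(K)<1$), $\Mod$ has a single cusp class up to finitely many. So $p$ is $\Mod$-equivalent to a cusp visible in $\tilde K$, and therefore $p\in\Mod(\infty)$.

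If this cusp-counting argument is delicate, I would instead argue directly with horoballs. Take a horoball $B$ based at $p$, invariant under $h$, and push a point of $B$ into $\tilde K$ by translations and inversions, as in the reduction to $\tilde K$ recalled above. Deep points of $B$ cannot all lie in the compact part of $\tilde K$, by Lemma~\ref{lemma:properaction} applied to the powers $h^k$. Hence some image of $p$ must be $\infty$.

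Second, I take $p=g(\infty)\in K$ and define the $\Mod$-invariant family of horoballs $\mathcal B_q=g'\mathcal B_\infty$ for $q=g'(\infty)$, where $\mathcal B_\infty=\{s\ge s_0\}$ with $s_0$ large enough that these horoballs are well defined. Well-definedness follows from $\Stab_\Mod(\infty)$ preserving horoheight, and I may also shrink $s_0$ in advance as needed. For a point $q\in K\setminus\{0\}$, let $h(q)=\sup \horoheight_\infty$ over $\mathcal B_q$. Translations by $\Zee$ preserve $h$. For the inversion $\iota$ and $q\in K$ with $\norm q\le\rad(K)<1$, identity~\eqref{eq:magicinversion} extended to $\Hyp$ should give $h(\iota q)\ge \rad(K)^{-2}h(q)$. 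I need to verify this carefully from the Cygan gauge, since horoballs at $q$ need not lie over $K$. The top point of $\mathcal B_q$ has gauge comparable to $\norm q$ when the horoball is small, and for large horoballs I handle the finitely many bounded cases separately.

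Thus $h(T^nq)$ grows geometrically while $T^nq\neq\infty$ stays in $K$. Meanwhile $h$ is bounded above on $K$, because horoballs based at finite points are disjoint from $\mathcal B_\infty$. So the orbit must reach $0$, i.e.\ $\iota T^{n-1}p=\infty$, and the expansion of $p$ is finite. This is exactly the Ford-circle argument from the real sketch.
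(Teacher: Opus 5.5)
Your steps (3)$\Rightarrow$(2) and (2)$\Rightarrow$(1) agree with the paper. The gap is in (1)$\Rightarrow$(3). Both of your substeps need horoballs at distinct cusps to be disjoint, and you never establish this.

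In the second substep, ``horoballs based at finite points are disjoint from $\mathcal B_\infty$'' says that $\mathcal B_\infty=\{s\ge s_0\}$ is precisely invariant under $\Stab_\Mod(\infty)$. This is not related to well-definedness of $q\mapsto\mathcal B_q$, which holds for every $s_0$ because $\Stab_\Mod(\infty)$ preserves $\horoheight_\infty$. Disjointness instead needs a uniform upper bound on the top heights of $g\{s\ge 1\}$ over all $g\in\Mod\setminus\Stab_\Mod(\infty)$, which is a Shimizu-type statement. Without it, ``$h$ is bounded above on $K$'' has no support, and the growth argument never terminates.

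The first substep has the same problem. The cusp-counting version uses the fact implicitly: showing a parabolic point is equivalent to the only ideal point of $\tilde K$ requires disjoint cusp neighbourhoods, and ``a single cusp class up to finitely many'' is not an argument. In the horoball version, your count with the powers $h^k$ does correctly show that the images in $\tilde K$ of deep points of $B$ lie above any prescribed height $s_1$. But ``hence some image of $p$ must be $\infty$'' does not follow, since a horoball $gB$ based at a finite point can contain points of large height.

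The paper supplies this ingredient through Lemma~\ref{lemma:horoballs} (Bowditch, via geometric finiteness of lattices). That lemma gives a $\Mod$-invariant family of pairwise disjoint horoballs at \emph{all} parabolic fixed points. With it your first substep is unnecessary: the growth argument runs directly on $\mathcal B_p$, which is exactly the paper's proof.

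If you prefer to use only Lemma~\ref{lemma:properaction}, both holes can be closed as follows.
\begin{enumerate}
\item[(i)] Suppose the top heights $H_n$ of $g_n\{s\ge 1\}$, with $g_n\notin\Stab_\Mod(\infty)$, were unbounded. Write the top point as $f_nQ_n$ with $f_n\in g_n\Zee$ and $Q_n\in\overline K\times\{1\}$. The number of $w\in\Zee$ moving $f_nQ_n$ by less than $1$ tends to infinity. The distinct conjugates $f_n^{-1}wf_n$ then all move a point of the compact set $\overline K\times\{1\}$ by less than $1$, contradicting Lemma~\ref{lemma:properaction}. Choosing $s_0>\sup_g H_g$ then gives disjointness.
\item[(ii)] In your horoball argument, follow the whole geodesic ray from the deep point $x$ to $p$. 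If $g(p)\neq\infty$, then $\horoheight_\infty$ along the image ray falls continuously from above $s_1$ to $0$. So some deeper point of the ray is sent, after a $\Zee$-translation, to height exactly $s_1>1$ over $K$, and hence into the compact piece $\tilde K\cap\{s\le s_1\}$. This contradicts your $h^k$ count, which holds uniformly along the ray.
\end{enumerate}

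Finally, the inversion step needs no special case for large horoballs. The horoheight of $\iota(\mathcal B_q)$ is an exact multiple of that of $\mathcal B_q$, depending only on $\norm{q}$. The paper takes this from Lemma~3.8 of \cite{lvDiophantine}.
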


To understand an arbitrary parabolic element of $\Mod$, we will need the theory of geometrically finite groups. In the special case of $\R^n$ including real, complex, and quaternionic CFs, the primary reference is Ratcliffe's book \cite{Ratcliffe}. In our broader context that includes Heisenberg CFs associated to complex hyperbolic space, we follow Bowditch \cite{Bowditch}.

\begin{lemma}Suppose $\Gamma\subset \Isom(\Hyp^n_k)$ is a lattice. Then it is geometrically finite.
\begin{proof}
Bowdich \cite{Bowditch} provides four equivalent  conditions characterizing geometric finiteness in pinched curvature, which includes the rank one symmetric spaces $\Hyp^n_k$. Condition F5 states that there is a bound on the orders of every finite subgroup of $\Gamma$, and also some neighborhood of the convex core of $\Gamma \backslash \X$ has finite volume. The first condition is redundant for lattices by Proposition 5.4.2 of \cite{Bowditch}. Since $\Gamma$ is a lattice, all of $\Gamma \backslash \X$ has finite volume (and the convex core coincides with $\Gamma \backslash \X$), so the second condition is also satisfied. 
\end{proof}
\end{lemma}

In geometrically finite groups, parabolic elements can be extended to large subgroups. Adapting Lemma 6.4 of \cite{Bowditch}, we have:
\begin{lemma}
Suppose $\Gamma\subset \Isom(\Hyp)$ is geometrically finite and $p\in \hat \X=\X\cup\{\infty\}$ a fixed point of a parabolic element. Then $p$ is a bounded-parabolic fixed point, i.e., $\Stab_\Gamma(p)\backslash (\hat \X\setminus\{p\})$ is compact.
\end{lemma}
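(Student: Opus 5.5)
The plan is to deduce the statement from Bowditch's equivalent characterizations of geometric finiteness rather than argue from scratch. Among the conditions shown equivalent in \cite{Bowditch}, condition F1 says the following. Every point of the limit set $\Lambda(\Gamma)\subset\hat\X$ is either a \emph{conical} limit point or a \emph{bounded-parabolic} point. Since $\Gamma$ is geometrically finite (for us, because it is a lattice, by the preceding lemma), F1 holds. It therefore suffices to check two things: $p\in\Lambda(\Gamma)$, and $p$ is not a conical limit point.

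The first fact is immediate. Let $g\in\Gamma$ be parabolic with fixed point $p$. For any $x\in\Hyp$, the orbit $g^n x$ converges to $p$ as $n\to\pm\infty$. This is easy after normalizing $g$ to a map $(q,s)\mapsto(p_0,0)*(q,s)$ with $p=\infty$, as in Section \ref{sec:Iwasawa}. Hence $p\in\Lambda(\Gamma)$.

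For the second fact, I will use the standard argument that a parabolic fixed point is never conical (Bowditch proves this in the same section).
\begin{enumerate}
\item Conjugate so that $p=\infty$. Suppose $\infty$ were conical. Then there would be a geodesic ray $\gamma$ ending at $\infty$, a compact set $C\subset\Hyp$, and infinitely many distinct $h_i\in\Gamma$ with $h_i\gamma(t_i)\in C$ and $t_i\to\infty$. Along a vertical ray, $\gamma(t_i)$ has horoheight from infinity tending to $\infty$.
\item The parabolic $g$ fixing $\infty$ preserves every horosphere at $\infty$, and its translation length on the horosphere of height $s$ tends to $0$ as $s\to\infty$, because of the dilation structure. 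So $d(\gamma(t_i),g\gamma(t_i))\to 0$.
\item Conjugating, $d(x_i,h_igh_i^{-1}x_i)\to 0$ with $x_i=h_i\gamma(t_i)\in C$. By Lemma \ref{lemma:properaction}, only finitely many elements of $\Gamma$ move a point of $C$ into the $1$-neighborhood of $C$. Hence the conjugates $h_igh_i^{-1}$ range over a finite set.
\item Passing to a subsequence, all these conjugates equal some nontrivial $g'\in\Gamma$. But its displacement $d(x_i,g'x_i)$ is bounded below by a positive constant on the compact set $C$, since a parabolic element fixes no point of $\Hyp$. This contradicts the displacement tending to $0$.
\end{enumerate}

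Combining the two facts with F1, $p$ is bounded-parabolic. By definition this means $\Stab_\Gamma(p)\backslash(\Lambda(\Gamma)\setminus\{p\})$ is compact. In our lattice setting, the limit set is all of $\hat\X$, because the orbit of any point accumulates everywhere on the boundary when the quotient has finite volume. So this is exactly compactness of $\Stab_\Gamma(p)\backslash(\hat\X\setminus\{p\})$, as stated.

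I expect the main obstacle to be citing Bowditch's equivalence in the precise form needed, in variable pinched curvature and including the complex, quaternionic and Heisenberg cases. A secondary issue is making sure his definition of bounded parabolic uses $\Lambda(\Gamma)\setminus\{p\}$, which is why the lattice property $\Lambda(\Gamma)=\hat\X$ is needed. If the F1 citation proves awkward, the fallback is Bowditch's condition F2: the thick part of the convex core is compact. From it one can extract a $\Gamma$-invariant family of disjoint horoballs at parabolic points. Projecting the compact thick part to the boundary from $p$ then gives a compact fundamental set for $\Stab_\Gamma(p)$ acting on $\hat\X\setminus\{p\}$.
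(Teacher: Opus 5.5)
Your proposal is correct and is essentially the paper's approach: the paper gives no argument beyond citing an ``adaptation'' of Bowditch's Lemma 6.4, and you unpack that citation via Bowditch's condition F1 together with the standard argument that a parabolic fixed point is never conical. Your use of the lattice hypothesis to get $\Lambda(\Gamma)=\hat\X$ is exactly that adaptation, and it cannot be dropped. For a merely geometrically finite group, e.g.\ the cyclic group generated by $z\mapsto z+1$ acting on $\Hyp^3_\R$, the quotient $\Stab_\Gamma(p)\backslash(\hat\X\setminus\{p\})$ is not compact, so your added hypothesis is the correct reading of the statement.
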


Lemma 6.2 of \cite{Bowditch} associates a standard cusped region to each bounded-parabolic point in the limit set of a discrete group. In our setting, all parabolic points are bounded-parabolic and standard cusped regions are horoballs, so the result gives us a generalized Ford circle at each parabolic fixed point:
\begin{lemma}
\label{lemma:horoballs}
Suppose $\Gamma \subset \Isom(\Hyp)$ is a lattice. Let $\Pi\subset \hat \X$ be the collection of parabolic fixed points of $\Gamma$. Then there is a collection of disjoint horoballs $B=\{\mathcal B_p: p\in \Pi\}$ that is invariant under $\Gamma$, i.e., for each $\gamma\in \Gamma$ and $\mathcal B\in B$ we have that $\gamma(\mathcal B)\in B$.
\end{lemma}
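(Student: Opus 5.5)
The plan is to obtain the horoballs from Bowditch's standard cusped regions (Lemma 6.2 of \cite{Bowditch}), using the two preceding lemmas to put every parabolic fixed point of $\Gamma$ in the setting where that result applies. First, since $\Gamma$ is a lattice, the lemma above shows it is geometrically finite. The next lemma then shows that every $p\in\Pi$ is a bounded-parabolic fixed point. Bowditch's Lemma 6.2 (equivalently, condition F1/F2 in \cite{Bowditch}) then gives a $\Gamma$-invariant family of pairwise disjoint standard cusped regions $\{U_p : p\in\Pi\}$, with $\gamma U_p = U_{\gamma p}$. Here $U_p$ is invariant under $\Stab_\Gamma(p)$ and satisfies $U_p\cap \gamma U_p=\emptyset$ for $\gamma\notin\Stab_\Gamma(p)$.

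In pinched curvature a standard cusped region is a neighborhood of $p$, but not necessarily a horoball. So the second step is to shrink each $U_p$ to a horoball $\mathcal B_p\subset U_p$. Choose finitely many representatives $p_1,\dots,p_m$ of the $\Gamma$-orbits on $\Pi$. Finiteness of this list comes from geometric finiteness: there are finitely many cusps. For each $p_i$, I would show that $U_{p_i}$ contains some horoball at $p_i$, and set $\mathcal B_{p_i}$ to be such a horoball. Then define $\mathcal B_{\gamma p_i}=\gamma\mathcal B_{p_i}$.

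This definition is consistent only if $\mathcal B_{p_i}$ is $\Stab_\Gamma(p_i)$-invariant, which is automatic for the following reason. Conjugate so that $p_i=\infty$. Every element of $\Stab_\Gamma(\infty)$ must preserve horoheight, since a $\Gamma$-element fixing $\infty$ with a dilation component would be loxodromic, and a discrete group cannot contain both a parabolic and a loxodromic element sharing a fixed point. Hence horoballs at $\infty$ are stabilizer-invariant. Disjointness follows because distinct $\mathcal B_q\subset U_q$ and the $U_q$ are disjoint, and $\Gamma$-invariance holds by construction.

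The main obstacle is showing that $U_{p}$ contains a horoball. With $p=\infty$, $\Stab_\Gamma(\infty)$ acts cocompactly on $\X$ (bounded parabolic), so it has a compact fundamental domain $D\subset\X$. The cusped region $U_\infty$ is $\Stab_\Gamma(\infty)$-invariant, and by Bowditch's construction it contains the points of $D\times[s_0,\infty)$ for $s_0$ large. Applying $\Stab_\Gamma(\infty)$, which preserves horoheight, gives $\{s\ge s_0\}\subset U_\infty$. If the precise form of Bowditch's lemma is inconvenient, the fallback is to construct the horoballs directly: take the horoball at $p_i$ small enough that its $\Gamma$-translates are disjoint. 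This works because by discreteness and Lemma \ref{lemma:properaction} only finitely many translates meet a given compact piece modulo the stabilizer.
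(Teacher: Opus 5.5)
Your proposal follows the paper's route: lattice $\Rightarrow$ geometrically finite $\Rightarrow$ every parabolic fixed point is bounded parabolic $\Rightarrow$ Bowditch's Lemma 6.2 supplies the $\Gamma$-invariant disjoint cusp regions. The only difference is that the paper simply asserts that in this lattice setting the standard cusped regions already are horoballs, while you shrink them to $\Stab_\Gamma(p)$-invariant horoballs; that extra step is sound and your consistency and disjointness checks work, though calling $U_p$ a ``neighborhood of $p$'' is too loose for a lattice (parabolic points are dense, so disjoint regions cannot be neighborhoods in $\overline{\Hyp}$), and the property you actually use is only $U_\infty\supset\{s\ge s_0\}$.
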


As one consequence of Lemma \ref{lemma:horoballs}, it makes sense to distinguish parabolic fixed points in $\hat \X$ from  loxodromic fixed points in $\hat \X$:
\begin{cor}
\label{cor:LoxodromicIsNotParabolic}
Suppose $\Gamma \subset \Isom(\Hyp)$ is a lattice. If $p\in \X$ is the fixed point of a parabolic element of $\Gamma$, then it is not a fixed point of a loxodromic element of $\Gamma$.
\begin{proof}
Since $\Isom(\Hyp)$ acts transitively on $\hat \X$, we may assume that $p=\infty$ and the associated horoball $\mathbb B$ is defined by $\height_\infty(z,t,s)=s\geq s_0$ for some $s_0$. If $p$ is fixed by a loxodromic element $\gamma$ preserving $\mathbb B$, it would have a second fixed point in $\X$, which we may normalize to $(0,0)\in \X$. But then it would preserve the point $(0,0,s_0)$, meaning that $\gamma$ is in fact parabolic, not loxodromic, a contradiction.
\end{proof}
\end{cor}

With these results in hand, we can prove Lemma \ref{lemma:rationalPoints}:
\begin{proof}[Proof of Lemma \ref{lemma:rationalPoints}]
(3$\rightarrow$2) If $p$ has finitely many digits, say $\{a_i(p)\}_{i=1}^n$, then by definition $p=a_1\iota a_2\iota \dots a_n (0)$ and hence $p = a_1\iota a_2 \iota \dots a_n \iota (\infty)$. Thus $p\in \Mod(\infty)$. 

(2$\rightarrow$1) If $p\in \Mod(\infty)$, then $p=\gamma\infty$ for some $\gamma\in \Mod$, and so it is fixed by every parabolic element $\gamma a\gamma^{-1}$ for $a\in \Zee\setminus \id$.

(1$\rightarrow$3) Let $B=\{\mathcal B_p\}$ be the collection of disjoint horoballs given by Lemma \ref{lemma:horoballs}, indexed by fixed points of parabolic elements of $\Mod$. In particular,  since $\Zee$ contains non-trivial parabolic elements, the point $\infty\in \hat \X$ is a parabolic fixed point, and $B$ contains a horoball at infinity of some height $h_0$: $\mathcal B_\infty = \{(p,s)\in \X\times \R_+: s>h_0\}$. Now, suppose $p\in \X$ is another parabolic fixed point with associated horoball $\mathcal B_p$. We may normalize $p$ by applying $\round{p}_K^{-1}$ so that $p\in K$. Recall that for a set $A\subset \Hyp$, the horoheight from $\infty$ is given by $\horoheight_\infty(A)=\sup \{s : (p,s)\in A\}$. Our normalization clearly does not affect the horoheight of $\mathcal B_p$. If we now have $p=0$, then we are done. Otherwise, applying $\iota$ produces a new horoball $\mathcal B_{\iota p}$, whose horoheight, by Lemma 3.8 of \cite{lvDiophantine}, satisfies $\horoheight_\infty(\iota(\mathcal B_p)) = |p|^{-1} \horoheight_\infty(\mathcal B_p) > (\rad K)^{-1}\horoheight_\infty(\mathcal B_p)$. Repeating the process of normalization and inversion for $i$ steps produces a horoball whose horoheight from infinity is at least $(\rad K)^{-i}\horoheight_\infty(\mathcal B_p)$. If the process were to repeat infinitely, this would exceed the horoheight of $\mathcal B_\infty$, causing a non-trivial intersection between them, contradicting disjointness. Since each step of inversion followed by normalization acts via the map $T$, we see that after finitely many steps we must have $T^ip = 0$, as desired.
\end{proof}

For the second half of Theorem \ref{thm:generalmain}, we prove the generalized Euler-Lagrange theorem:

\begin{lemma}Fix a proper and discrete Iwasawa CF. Then $x\in \X$ has an eventually-periodic expansion if and only if it is the fixed point of a loxodromic isometry of the associated hyperbolic space $\Hyp$.
\begin{proof}
Suppose first, without loss of generality, that $x\in K$ is purely periodic, so $T^n x= x$ for some $n>0$. That is, $a_n^{-1} \iota a^{-1}_{n-1} \iota \cdots a_1^{-1} \iota x = x$, for digits $a_1, \ldots, a_n\in \Zee \subset \Isom(\X)$. Extending the mappings to isometries of hyperbolic space, we have $M_n x = x$ for $M_n\in \Isom(\Hyp)$. To prove that $M_n$ is loxodromic, observe that $x$ is a repelling fixed point: for a nearby point $y$, by the inversion identities \eqref{eq:magicinversion}, we have 
$$d(\iota x, \iota y) = \norm{x}^{-1}\norm{y}^{-1} d(x,y) > \rad(K)^{-1} \norm{y}^{-1} d(x,y).$$
Since $a_1$ is an isometry,
$$d(a_1^{-1} \iota x, a_1^{-1}\iota y) > \rad(K)^{-1} \norm{y}^{-1}  d(x,y).$$
Repeating, one obtains
$$d(M_n x, M_n y) \geq \rad(K)^{-n}\norm{M_{n-1} y}^{-1}\cdots \norm{y}^{-1} d(x,y)$$
We may choose $y$ sufficiently close to $x$ so that for each $m<n$ the iterates $M_my$ satisfy $\norm{M_my}<1$, giving $d(M_nx, M_ny)\geq \rad(K)^{-n}d(x,y)$. This shows that $x$ is a repelling fixed point of $M_n$, as desired. 

Assume, conversely, that $M\in \Mod$ is loxodromic with distinct fixed points $x, x'$, with $x\in K$, joined by a hyperbolic geodesic $\gamma$.  Fix the sequence mappings $M_n\in \Mod$ associated to the generalized Gauss map for $x$, so that $T^n x = M_n x$. Set $x_n = M_n x$, $x'_n = M_n x'$, and $\gamma_n = M_n \gamma$. In view of Corollary \ref{cor:LoxodromicIsNotParabolic} and Lemma \ref{lemma:rationalPoints}, this sequence is infinite.

We will say that a pair of points $(a,b)$ is \emph{widely-spaced} if $\norm{a}<\rad(K)$ and $\norm{b}>\rad(K)^{-1/2}$. We claim that there is an $n_0>0$ such that the pair $(x_{n_0}, x'_{n_0})$ is widely spaced. Indeed, if this condition is not satisfied for the first $n$ iterations, then by \eqref{eq:magicinversion} we have $d(M_n(x), M_n(x'))\geq \rad(K)^{-n/2}d(x,x')$, which leads to a contradiction since the points are forced arbitrarily far apart as $n$ increases, with $M_n(x)$ remaining in $K$. When $(x_n, x'_n)$ is widely spaced, Lemma \ref{lemma:horoheightminimum} guarantees that the geodesic $\gamma_n$ joining them passes through the gauge unit sphere $\mathbb S$ in at least one point at a horoheight bounded below by some $h_0>0$. Let $\Sph' = \{ s\in \mathbb S : \height_\infty(s)\geq h_0\}$, noting that it is a compact subset of $\Hyp$, and let $s_0 \in \gamma \cap \Sph'$ be some intersection point, possibly not unique due to non-constant curvature. 

For simplicity of notation, assume that $n_0=0$. Iterating further, it may happen that $\norm{M_{n_0+1}x'}<\rad(K)^{-1/2}$, in which case \eqref{eq:magicinversion} again pushes the iterates of $x$ and $x'$ apart. Thus, for some $n_1>n_0$ we will again have that $(x_{n_1},x'_{n_1})$ are widely-spaced, with $\gamma_{n_1}$ passing through the set $\Sph'$. Since $s_0\in \gamma$, we know $M_{n_1} s_0\in \gamma_{n_1}$, but it may no longer be the case that $M_{n_1} s_0\in \Sph'$. Instead, note that the mapping $M$ moves points along $\gamma$ with some translation distance $\ell_M$, and that $M_{n_1}$ preserves this distance, so that $M_{n_1} M^j s_0$, $j\in \mathbb{Z}$, are a sequence of points along $\gamma_{n_1}$ separated by a distance $\ell_M$. In particular, there is some $j_1\in \N$ such that $M_{n_1} M^{j_1} s_0\in \overline{N_{\ell_M}\Sph'}$. Continuing to iterate, we obtain an infinite sequence of increasing indices $n_i$ where $(x_{n_i},x'_{n_i})$ are widely spaced and corresponding powers $j_i$ satisfying  $M_{n_i} M^{j_i} s_0\in \overline{N_{\ell_M}\Sph'}$. By Lemma \ref{lemma:properaction}, there are finitely many elements of $\Mod$ sending $s_0$ into $\overline{N_{\ell_M}\Sph'}$, so we must have that for some $i_1<i_2$ we have $M_{n_{i_1}}M^{j_{i_1}}=M_{n_{i_2}}M^{j_{i_2}}$. Thus, $M^{-1}_{n_{i_2}}M_{n_{i_1}}=M^{j_{i_2}}M^{-j_{i_1}}$. We conclude that $M^{-1}_{n_{i_2}}M_{n_{i_1}}x=M^{j_{i_2}}M^{-j_{i_1}}x=x$, giving $T^{n_{i_1}}x = T^{n_{i_2}}x$, so $x$ has a periodic expansion, as desired.

\end{proof}
\end{lemma}
\section{Loxodromic fixed points}
\label{sec:loxodromic}

The classical Euler-Lagrange theorem characterizes repeating CFs as the quadratic surds, while our Theorem \ref{thm:generalmain} instead characterizes them as the fixed points of loxodromic matrices. In this section, we first reconcile the two viewpoints in the real and complex cases, and then study the higher-dimensional cases.

\subsection{Real and complex cases}\label{sec:backwardscompatability} 
The following theorem can be proven using the Euler-Lagrange theorem. We give a direct proof that avoids CF theory. 

\begin{thm}\label{thm:backwards}
    A number $x\in \R$ is a quadratic surd if and only if it is a fixed point of a loxodromic transformation in $SL(2,\Z)$.
\begin{remark}
In $SL(2,\Z)$, loxodromic transformations coincide with hyperbolic transformations.
\end{remark}

\begin{proof}    
    Suppose first that $M=\begin{pmatrix} A & B\\ C & D \end{pmatrix}$ is loxodromic. Let $x_1$ and $x_2$ be the fixed points of $M$ under the linear-fractional action on $\hat \R$, so that $\frac{Ax_i+B}{Cx_i+D}=x_i$ and $x_1\neq x_2$. Then each $x_i$ satisfies the equation $Cx^2 + (D-A)x-B=0$, so each $x_i$ is a solution to a non-degenerate quadratic equation over $\Z$.

    Conversely, consider a non-degenerate quadratic equation $ax^2+bx+c=0$ with $a,b,c\in \Z$. Solutions to this equation are irrational, so the discriminant $\Delta=b^2-4ac$ a non-square integer. We seek a matrix $M\in SL(2,\Z)$ whose fixed points are the solutions of the given quadratic equation. The fixed points of $M$ satisfy the equation $Cx^2 + (D-A)x-B=0$ and multiples thereof, so we need to have $C=\lambda a$, $D-A=\lambda b$, and $-B=\lambda c$ for some $\lambda\neq 0$. We will, in fact, demand that $\lambda \in \N$. 
    
    We require $M\in SL(2,\Z)$, so we need
    $1=AD-BC = A(\lambda b + A) + \lambda^2 a c$.
    That is, we need $A=(-\lambda b \pm \sqrt{\lambda^2 \Delta + 4})/2$. Because $\Delta$ is non-square, the Pell's equation $\mu^2 \Delta +1 =n^2$ has a non-trivial solution $(\mu,n)\in \Z^2$, see \cite[Thm.~34.1]{silvermanfriendly}. Fix one solution with $\mu > 0$ and take $\lambda=2\mu$. The resulting choices of $A,B,C,D$ satisfy $\det M = 1$ and have the desired fixed points. Lastly, one observes that matrices in $SL(2,\Z)$ with exactly two fixed points are loxodromic. Indeed, one may send the two points to $0$ and $\infty$ by an element of $SL(2,\R)$ so that $M$ is conjugate to $\begin{pmatrix} \alpha & 0\\ 0 & \alpha^{-1} \end{pmatrix}$ for some $\alpha\neq \pm 1$.
\end{proof}
\end{thm}

To generalize to the complex setting, we require a solution to Pell's Equation:

\begin{thm}[Oswald \cite{oswald2015diophantine}]
Let $d\in \N$ and $\Delta\in \Z[\ii\sqrt{d}]$ non-square. Then the equation $x^2+\Delta y^2=1$ has infinitely many solutions $x,y\in \Z[\ii\sqrt{d}]$.
\begin{proof}[Sketch of proof]
We briefly describe Oswald's method for completeness. To begin with, Oswald shows that irrational values $z\in \C$ can be approximated by infinitely many rationals $p/q$ with $p,q\in \Z[\ii\sqrt{d}]$ by $|z-p/q| \le C/|q|^2$ for some constant $C>0$. For certain values of $d$ this can be proven by CF theory, but Oswald uses results from the geometry of numbers instead. Then, after writing $x^2-\Delta y^2=(x-y\sqrt{\Delta })(x+y\sqrt{\Delta })$, one can choose $x/y$ to be one of these good rational approximations for $\sqrt{\Delta }$, guaranteeing that $x^2-\Delta y^2$ belongs to a finite set of integers. These solutions can be manipulated to find a solution to $x^2-\Delta y^2=1$. 
\end{proof}
\end{thm}

As in the real case, $x\in \C$ is a quadratic surd over $\Z[\ii \sqrt{d}]$ if it is a solution to a quadratic polynomial with coefficients in $\Z[\ii \sqrt{d}]$ but not to a linear one.

It should be emphasized that although the set of all roots of polynomials in $\mathbb{Z}[x]$ and all roots of polynomials in $\mathbb{Z}[\ii][x]$ are the same, it is not true that they will necessarily have the same minimal polynomial degree. For instance, $\sqrt{1+\ii}$ has minimal polynomial is $x^4-2x^2-2$ in $\mathbb{Z}[x]$, and $x^2-(1+\ii)$ in $\mathbb{Z}[\ii][x]$. It can also be checked that $[1;\overline{-2\ii,2}]$ is a periodic complex CF expansion for $\sqrt{1+\ii}$.

We are now ready to state the complex result, whose proof is identical to the proof of Theorem \ref{thm:backwards}, with the adjustments mentioned above.
\begin{thm}
Let $x\in\mathbb{C}$, and let $d\in \mathbb{N}$. Then $x$ is a quadratic surd over $\mathbb{Z}[\ii\sqrt{d}][x]$ if and only if it is the fixed point of a hyperbolic transformation in $\operatorname{SL}(2,\mathbb{Z}[\ii\sqrt{d}])$.
\end{thm}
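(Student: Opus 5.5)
The plan follows the proof of Theorem \ref{thm:backwards}, with $\Z$ replaced by $R=\Z[\ii\sqrt d]$ and the real Pell equation replaced by Oswald's theorem.

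\emph{Forward direction.} Suppose $M=\begin{pmatrix} A & B\\ C & D\end{pmatrix}\in SL(2,R)$ is hyperbolic (loxodromic) and fixes $x$. The fixed-point equation $\frac{Ax+B}{Cx+D}=x$ gives
\[
Cx^2+(D-A)x-B=0,
\]
a quadratic with coefficients in $R$. It remains to check that $x$ does not also satisfy a linear equation over $R$, i.e.\ that $x$ is not in $\mathbb{Q}(\ii\sqrt d)\cup\{\infty\}$. Every such point is $\gamma(\infty)$ for some $\gamma\in SL(2,\mathcal O)$, where $\mathcal O$ is the ring of integers of $\mathbb{Q}(\ii\sqrt d)$. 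Then $\gamma^{-1}M\gamma$ fixes $\infty$, so it is upper triangular. Its diagonal entries are units whose product is $1$, so its trace is a sum $u+u^{-1}$ of roots of unity. Such a matrix is parabolic or elliptic, never loxodromic. This is the same argument used in the real sketch, and it is an instance of Corollary \ref{cor:LoxodromicIsNotParabolic}. So $x$ is a quadratic surd.

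\emph{Converse.} Suppose $ax^2+bx+c=0$ with $a,b,c\in R$ and $x\notin \mathbb{Q}(\ii\sqrt d)$. Then the discriminant $\Delta=b^2-4ac$ is a non-square in $R$. Following the real construction, set
\[
C=\lambda a,\qquad D-A=\lambda b,\qquad B=-\lambda c,
\]
and choose $A$ so that $\det M=1$. This requires $(2A+\lambda b)^2=\lambda^2\Delta+4$. Take $n,\mu\in R$ with $n^2-\Delta\mu^2=1$ and $\mu\neq 0$; Oswald's theorem supplies such a pair after replacing $\Delta$ by $-\Delta$, which is still a non-square. Put $\lambda=2\mu$ and $A=n-\mu b$. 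Then
\[
(2A+\lambda b)^2=4n^2=4\mu^2\Delta+4=\lambda^2\Delta+4,
\]
as required. All four entries lie in $R$, and the fixed points of $M$ are exactly the two roots of the given quadratic.

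\emph{Main obstacle: showing $M$ is loxodromic.} Over $\R$, "two fixed points" already forced hyperbolicity. Over $\C$, an elliptic element of $SL(2,\C)$ also has two fixed points on $\hat\C$. So I will argue through the trace, $\operatorname{tr}M=2A+\lambda b=2n$. The matrix $M$ is elliptic or parabolic exactly when $\operatorname{tr}M\in[-2,2]$, that is, when $n\in[-1,1]$. There are two cases.
\begin{itemize}
\item If $n\in\{0,\pm1\}$: $n=\pm1$ gives $\Delta\mu^2=0$, impossible since $\mu\neq0$. $n=0$ gives $\Delta=-\mu^{-2}$, which is not in $R$ unless $\mu$ is a unit, making $\Delta=-1$; that case needs a separate check.
\item Otherwise: Oswald's theorem gives infinitely many solutions, while only finitely many $n\in R$ satisfy $|n|\le 1$. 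So some solution has $n\notin[-1,1]$, and the resulting $M$ is loxodromic.
\end{itemize}
Its fixed points are the roots of $ax^2+bx+c$, which completes the proof.
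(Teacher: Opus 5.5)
Your construction is the paper's, and your trace argument for loxodromicity is correct and actually needed. The paper calls the proof ``identical'' to the real case, whose last step (two fixed points imply loxodromic) fails over $\C$. This really happens: for $d=5$ and $x^2+\ii\sqrt5\,x-1$ one has $\Delta=-1$, and the Pell solution $(n,\mu)=(0,1)$ gives $M=\begin{pmatrix}-\ii\sqrt5&2\\2&\ii\sqrt5\end{pmatrix}$, of trace $0$, hence elliptic. Your second bullet already covers this case, so no separate check is needed: each $n$ determines $\mu$ up to sign, so infinitely many solutions include one with $|n|>1$, and then $\mu\neq0$ automatically. However, two of your supporting claims are false as stated.

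\emph{Forward direction.} $SL(2,\mathcal O)$ is transitive on $\mathbb{Q}(\ii\sqrt d)\cup\{\infty\}$ only when the class number is $1$. The ideal class of $(p,q)$ is an invariant of the orbit of $p/q$. So for $d=5$ the point $(1+\ii\sqrt5)/2$, where $(2,1+\ii\sqrt5)$ is non-principal, is not $\gamma(\infty)$ for any $\gamma\in SL(2,\mathcal O)$. Instead, conjugate by some $\gamma\in SL(2,\mathbb{Q}(\ii\sqrt d))$ with $\gamma(\infty)=x$. The diagonal entries of $\gamma^{-1}M\gamma$ are eigenvalues of $M$. They lie in $\mathbb{Q}(\ii\sqrt d)$ and are roots of $t^2-(\operatorname{tr}M)t+1$ with $\operatorname{tr}M\in R$. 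Hence they are units of $\mathcal O$, i.e.\ roots of unity, and your conclusion follows. Alternatively, Corollary~\ref{cor:LoxodromicIsNotParabolic} applies to the lattice $SL(2,R)$ once you note that $x=p/q$ is fixed by the parabolic element $\begin{pmatrix}1-pq&p^2\\-q^2&1+pq\end{pmatrix}\in SL(2,R)$.

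\emph{Converse.} The claim ``$-\Delta$ is still a non-square'' is false in general. In the example above $-\Delta=1$, and $x^2+1$ over $\Z[\ii\sqrt2]$ gives $-\Delta=4$. In such cases the theorem as printed does not apply to $-\Delta$, so your citation does not produce the solutions you need. The printed sign is a misprint: taken literally, the statement fails for $d=5$, $\Delta=-1$. There $x^2-y^2=1$ has only the solutions $(\pm1,0)$, because the units of $\Z[\ii\sqrt5]$ are $\pm1$.

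The form that is actually true, and the one used in the sketch, is this: $n^2-\Delta\mu^2=1$ has infinitely many solutions in $R$ whenever $\Delta$ is not a square in $\mathbb{Q}(\ii\sqrt d)$. That hypothesis is exactly what $x\notin\mathbb{Q}(\ii\sqrt d)$ gives you. Cite it in that form with no sign change. For $d=5$, $\Delta=-1$ it yields e.g.\ $(n,\mu)=(9,4\ii\sqrt5)$, and hence a matrix of trace $18$.

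With these two repairs your proof is complete, and more careful than the paper's.
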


\subsection{Clifford algebra formalism}\label{sec:Clifford}

Real and complex CFs are commonly encoded using matrices in $SL(2,\R)$ and $SL(2,\C)$, respectively. For Iwasawa CFs with underlying space $\R^n$, including quaternionic and octonionic CFs, one can encode the CFs using 2-by-2 Clifford matrices over the appropriate Clifford algebra. We now recall this formalism, following Ahlfors \cite{ahlfors1985fixed} who builds on Vahlen \cite{Vahlen1902}.

The Clifford algebra $A_n$, $n\ge 0$, is an associative algebra over the reals generated by elements $e_1, \dots e_n$, which satisfy the relations $e_i^2 =-1$ and $e_ie_j=-e_je_i$ for $1\le i,j\le n$. Elements of $A_n$ can be written uniquely as sums
\[
\sum_{I\subset \{1,2,\dots, n\}} a_I \prod_{i\in I} e_i
\]
where the product goes from left to right in increasing order of index. For simplicity, we write $a_0$ instead of $a_{\emptyset}$ and ignore set-notation in subscripts, so that $a_{12}=a_{\{1,2\}}$. Thus, $A_2$ consists of elements of the form $a_0+a_1e_1+a_2e_2+a_{12}e_1e_2$, with real coefficients. 

The \emph{involution} operation on $A_n$ takes $a$ to $a'$, which is defined by replacing all $e_i$ with $-e_i$. The \emph{reversion} operation on $A_n$ takes $a$ to $a^*$, which is defined as the element with the product of $e_i$'s all reversed in order. Then $\overline{a}$ is defined as $\overline{a}={a'}^*$.

Identify $\R^n$ with elements of $A_{n-1}$ of the form $a_0+a_1e_1+a_2e_2+\dots+a_{n-1}e_{n-1}$.
The Clifford group $\Gamma_n$ is then the multiplicative subgroup of $A_{n-1}$ generated by non-zero elements of $\R^n$. On $\Gamma_n$, we can define a real-valued function $\norm{a}^2=a\overline{a}$, satisfying $\norm{ab}=\norm{a}\norm{b}$. Inverses in $\Gamma^n$ are then given by $a^{-1}=\overline{a}/\norm{a}^2$. 

Now, a matrix \[
\begin{pmatrix}
a& b \\ c & d
\end{pmatrix}\]
is a Clifford matrix if it satisfies
\begin{enumerate}
\item $a,b,c,d\in \Gamma_n\cup \{0\}$
\item $ad^*-bc^*=1$
\item $ac^{-1}$ and $c^{-1}d$ belong to $\R^n$ if $c\neq 0$.
\item $db^{-1}$ and $b^{-1}a$ belong to $\R^n$ if $b\neq 0$.
\end{enumerate}
One shows that Clifford matrices form a group and act on the extended real vectors $\widehat{\R^n}=\R^n \cup\{\infty\}$ by fractional linear transformations in the usual way:
\[
\begin{pmatrix}
a& b \\ c & d
\end{pmatrix}= (ax+b)(cx+d)^{-1}.
\]
The inversion $z\mapsto -1/z$ and shifts $z\mapsto z+a$ on $\R^n\setminus\{0\} \subset \Gamma_n$ then correspond, respectively, to the familiar Clifford matrices $\begin{pmatrix}
0& -1 \\ 1 & 0
\end{pmatrix}$ and  $\begin{pmatrix}
1& a \\ 0 & 1
\end{pmatrix}.$
Indeed, according to Vahlen's theorem \cite{Vahlen1902} (proof sketched in \cite{ahlfors1985fixed}) the Clifford matrices acting on the hyperbolic upper half-space $\Hyp\subset \widehat{\R^{n+1}}$ are  (modulo $\pm \operatorname{Id}$) the orientation-preserving isometries of $\Hyp$.

We record a simple but useful result about 2-by-2 matrices over any algebra:

The primary challenge for us in working with Clifford matrices is that it is difficult to identify elements of $\Gamma_n$, and therefore difficult to detect whether an integer-valued matrix is a Clifford matrix. While we will not fully solve that problem here, we will show it is nevertheless tractable.

\begin{lemma}
\label{lemma:hiddenSymmetries}
Let $A$ be an algebra and $a,b\in A$ be inverse elements. Then,
\(
\begin{pmatrix}0&-1\\1&0\end{pmatrix}
\begin{pmatrix}1&-b\\0&1\end{pmatrix}
\begin{pmatrix}0&-1\\1&0\end{pmatrix}
\begin{pmatrix}1&-a\\0&1\end{pmatrix}
\begin{pmatrix}0&-1\\1&0\end{pmatrix}
\begin{pmatrix}1&-b\\0&1\end{pmatrix}=
\begin{pmatrix}a&0\\0&b\end{pmatrix}.
\)
\end{lemma}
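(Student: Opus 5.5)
The plan is a direct computation. We multiply the six matrices in the natural grouping and use only associativity of $A$ together with the relations $ab=ba=1$; no commutativity is needed. It is convenient to write $J=\begin{pmatrix}0&-1\\1&0\end{pmatrix}$ and $U(x)=\begin{pmatrix}1&-x\\0&1\end{pmatrix}$. The key step is to compute the block $JU(x)$ once:
\(
JU(x)=\begin{pmatrix}0&-1\\1&-x\end{pmatrix}.
\)
The left-hand side is then $JU(b)\,JU(a)\,JU(b)$.

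Next we multiply the first two factors:
\(
\begin{pmatrix}0&-1\\1&-b\end{pmatrix}\begin{pmatrix}0&-1\\1&-a\end{pmatrix}
=\begin{pmatrix}-1& a\\ -b & -1+ba\end{pmatrix}
=\begin{pmatrix}-1&a\\-b&0\end{pmatrix},
\)
where the last equality uses $ba=1$. We have to keep the order of the products $b\cdot a$ straight, since $A$ need not be commutative; this bookkeeping is the only point that needs care.

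Multiplying by the last factor gives
\(
\begin{pmatrix}-1&a\\-b&0\end{pmatrix}\begin{pmatrix}0&-1\\1&-b\end{pmatrix}
=\begin{pmatrix}a& 1-ab\\ 0 & b\end{pmatrix}
=\begin{pmatrix}a&0\\0&b\end{pmatrix},
\)
using $ab=1$. This is exactly the claimed identity.

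There is no real obstacle. The only thing to watch is that each entry is written with the algebra elements in the correct left-to-right order, so that the cancellations occur as $ba$ in one place and $ab$ in the other. This is where we use that $a$ and $b$ are two-sided inverses.
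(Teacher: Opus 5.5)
Your computation is correct: grouping the product as $(JU(b))(JU(a))(JU(b))$ with $JU(x)=\begin{pmatrix}0&-1\\1&-x\end{pmatrix}$, you use $ba=1$ and then $ab=1$ in the right places and get exactly $\begin{pmatrix}a&0\\0&b\end{pmatrix}$ without needing commutativity. The paper states this lemma without a written proof, treating it as exactly this direct matrix multiplication, so your argument is essentially the intended one.
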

\subsection{Loxodromic fixed points in $\R^3$}\label{sec:R3}

Here we will consider continued fractions in $\R^3$, with associated Clifford algebra 
\[
A_2 = \{a_0+a_1e_1+a_2e_2+a_{12}e_1e_2:a_0,a_1,a_2,a_{12}\in \R\}.
\]
We may in fact identify $A_2$ with the quaternions, using $e_1=\ii$, $e_2=\jj$, and $e_1e_2=\kk$. All the operations on $A_2$ act as their usual counterparts do on the quaternions.

Our continued fractions will have digits in $\Z^3=\{a_0+a_1e_1+a_2e_2:a_0,a_1,a_2\in\Z\}$, and thus we will make notations for 
\[
A_2(\Z)=\{a_0+a_1e_1+a_2e_2+a_{12}e_1e_2:a_0,a_1,a_2,a_{12}\in \Z\}
\]
and $\Gamma_3(\Z)$, which are products of non-zero elements of $\Z^3$. We will let $SL(2,\Z^4)$ denote the set of Clifford matrices whose coordinates all lie in the restricted set $\Gamma_3(\Z)\cup \{0\}$. 

We now show that $SL(2,\Z^4)$ is the generalized modular group $\Mod$ of a 3D continued fraction algorithm given by the data $\X=\R^3$ seen as the quaternions with no $\kk$ component, $\iota(z)=-1/z$, $\mathcal{Z}=\Z^3$, and $K$ the unit cube centered at the origin (note that we will not use $K$ here).

\begin{lemma}
\label{lemma:modulargroup}
Let $\Mod$ be the group of Clifford matrices generated by the inversion $(x,y,z)\mapsto \frac{(-x,y,z)}{x^2+y^2+z^2}$ and translations in $\Z^3$. Then $\Mod=SL(2,\Z^4)$.
\begin{proof}
The generators of $\Mod$ are encoded by the Clifford matrices $\begin{pmatrix}0&-1\\1&0\end{pmatrix}$ and $\begin{pmatrix}1&a\\0&1\end{pmatrix}$, for $a\in \Z^3$. Thus, $\Mod$ is a subgroup of $SL(2,\Z^4)$.

Conversely, we suppose $\begin{pmatrix}a&b\\c&d\end{pmatrix}\in SL(2,\Z^4)$ with $c\neq 0$. We will apply the Euclidean algorithm in $\R^3$ to the first column of this matrix. 

If $\norm{c}>\norm{a}$, then we replace the matrix with 
\[
\begin{pmatrix}A &B\\C&D\end{pmatrix}= \begin{pmatrix}0&-1\\1&0\end{pmatrix} \begin{pmatrix}a&b\\c&d\end{pmatrix}
\]
so that $\norm{C}=\norm{a}<\norm{c}=\norm{-c}=\norm{A}$.

If $\norm{c}\le \norm{a}$, note that $ac^{-1}\in \R^3$ by definition of Clifford matrices. We compute the nearest integer $a_0=\round{ac^{-1}}\in\Z^3$ and replace the matrix with $\begin{pmatrix}A&B\\C&D\end{pmatrix}= \begin{pmatrix}1&-a_0\\0&1\end{pmatrix} \begin{pmatrix}a&b\\c&d\end{pmatrix}$. We then have that $\norm{ac^{-1}-a_0}<1$, so that $\norm{A}=\norm{a-a_0c}<\norm{C}$ and therefore $\norm{A}<\norm{C}$. 

We repeat the two previous paragraphs above, replacing the matrix with another matrix within $\Mod$. At each step, either the maximum norm of the left column is the same (if we applied the inversion matrix) or smaller than before (if we applied the translation matrix), and we apply the translation matrix at every other step. Since the norms of elements of $\Gamma(\Z)$ belong to $\Z_{\ge 0}$, which is discrete, this algorithm terminates in finite time at a matrix with $c=0$.

We have thus reduced the situation to a Clifford matrix with $c=0$, $bd^{-1}$ a vector, and $ad^*=1$. Left-multiplying by $\begin{pmatrix}1&-bd^{-1}\\0&1\end{pmatrix}$, we are reduced to a diagonal matrix with $ad^*=1$.

Since $\norm{a}\norm{d^*}=1$, we must have $\norm{a}=1$, so $a\in \{\pm 1, \pm \ii ,\pm \jj, \pm \kk\}$. We will show that it is possible to obtain all 8 such matrices inside $\Mod$. For $a\in \{\pm 1, \pm \ii, \pm \jj\}$, each choice of $a$ is invertible inside $\Gamma(\Z)$, so the desired decomposition is given by Lemma \ref{lemma:hiddenSymmetries}. For $a=\pm \kk$, we cannot directly use $\kk$ as a digit in Lemma \ref{lemma:hiddenSymmetries}, but we have that $\kk=\ii\jj$, so we can write it as a product of generators by first doing so with $\ii$ and $\jj$ as above and multiplying the resulting diagonal matrices.
This shows that $SL(2,\Z^4)\subset \Mod$, completing the proof.
\end{proof}
\end{lemma}

Categorizations of quaternionic matrices into elliptic, parabolic, and loxodromic types are somewhat complicated. We present here the simplest categorization the authors are aware of, due to Parker and Short \cite{parker2009conjugacy}.

Given an arbitrary matrix $A=\begin{pmatrix} a & b \\ c & d \end{pmatrix}$ with quaternionic entries, define the following quantities:
\begin{align*}
    \alpha &= \norm{a}^2\norm{d}^2+\norm{b}^2\norm{c}^2-2 \Re(a\overline{c}d\overline{b})\\
    \beta &= \Re\left((ad-bc)\overline{a}+(da-cb)\overline{d}   \right)\\
    \gamma &= \norm{a+d}^2+2\Re(ad-bc)\\
    \delta &= \Re(a+d)
\end{align*}
Parker and Short define the group $SL(2,\mathbb{H})$ to be all such matrices with $\alpha=1$. These matrices are not necessarily Clifford matrices, but do contain $SL(2,\Z^4)$ as a subgroup, which can be easily seen by comparing the corresponding M\"{o}bius transformations.  For matrices in $SL(2,\mathbb{H})$, two more quantities are needed:
\begin{align*}
\sigma &= \begin{cases}
cac^{-1}d-cb, & c\neq 0,\\
bdb^{-1}a, & c=0,b\neq 0,\\
(d-a)a(d-a)^{-1}d, & b=c=0, a\neq d,\\
a\overline{a}, & b=c=0,a=d,
\end{cases}\\
\tau &= \begin{cases}
cac^{-1}+d, & c\neq 0,\\
bdb^{-1}+a, & c=0, b\neq 0\\
(d-a)a(d-a)^{-1}+d, & b=c=0,a\neq d,\\
a+\overline{a}, & b=c=0, a=d,
\end{cases}
\end{align*}
which function as the quaternionic determinant and trace. Finally a matrix is simple if it is conjugate to a matrix with real entries and $k$-simple if it can be expressed as a product of $k$ simple matrices and no fewer. All matrices in $SL(2,\mathbb{H})$ are at most $3$-simple.

We note that the generators for $SL(2,\Z^4)$ belong to $SL(2,\mathbb{H})$ and thus the former is a discrete subgroup of the latter. We can therefore use the following characterization of matrices by Parker and Short to categorize matrices in $SL(2,\Z^4)$.

\begin{prop}[Theorem 1.4, \cite{parker2009conjugacy}]\label{prop:parker}
Suppose $A\in SL(2,\mathbb{H})$.
\begin{enumerate}
\item If $\sigma=1$ and $\tau\in \mathbb{R}$, then $A$ is $1$-simple and 
\begin{enumerate}
\item if $0\le \delta^2<4$ then $A$ is elliptic,
\item if $\delta^2=4$, then $A$ is parabolic,
\item if $\delta^2>4$, then $A$ is loxodromic.
\end{enumerate}
\item If $\beta=\delta$ and either $\tau\not\in \R$ or $\sigma\neq 1$ then $A$ is $2$-simple and
\begin{enumerate}
\item if $\gamma-\delta^2<2$, then $A$ is elliptic,
\item if $\gamma-\delta^2=2$, then $A$ is parabolic,
\item if $\gamma-\delta^2>2$, then $A$ is loxodromic.
\end{enumerate}
\item If $\beta\neq \delta$, then $A$ is $3$-simple loxodromic.
\end{enumerate}
\end{prop}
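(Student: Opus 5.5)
The plan is to reduce everything to conjugacy-invariant data: identify each quantity in the statement with data read off from a normal form, and then check the inequalities on that normal form. Embed $\mathbb{H}$ in $M_2(\C)$ via $q=z_1+z_2\jj\mapsto\begin{pmatrix} z_1 & z_2\\ -\overline{z_2} & \overline{z_1}\end{pmatrix}$. Then $A\in SL(2,\mathbb{H})$ becomes a $4\times 4$ complex matrix $\hat A$, and the condition $\alpha=1$ is exactly the condition that the Study determinant of $\hat A$ equals $1$, i.e.\ $\det\hat A=1$. I will show that the characteristic polynomial of $\hat A$ is
$$x^4-2\delta x^3+(\gamma)x^2-2\beta x+1,$$
up to the normalizations used in the definitions of $\beta,\gamma,\delta$; this is a direct expansion using $\det\hat A=1$. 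Since $\hat A$ and $\hat A^{-1}$ have the same spectrum (here the self-reciprocity of the polynomial is used), and the spectrum is closed under complex conjugation (the embedding commutes with the quaternionic structure), it follows that $\beta,\gamma,\delta$ are conjugation invariants.

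Next I would use the standard normal forms in $SL(2,\mathbb{H})$. Every element is conjugate to one of the following:
\begin{enumerate}
\item a diagonal matrix $\begin{pmatrix} re^{\ii\theta} & 0\\ 0 & r^{-1}e^{\ii\phi}\end{pmatrix}$ with $r>0$ and $\theta,\phi\in[0,\pi]$;
\item a parabolic form $\begin{pmatrix} e^{\ii\theta} & 1\\ 0 & e^{\ii\theta}\end{pmatrix}$.
\end{enumerate}
The diagonal form is loxodromic if $r\neq 1$ and elliptic (or the identity) if $r=1$. Existence of these forms follows from the fixed-point classification recalled in \S\ref{sec:Iwasawa}: conjugate the fixed points to $0,\infty$ (or to $\infty$ alone), then conjugate each diagonal quaternion into $\C$. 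On these forms the eigenvalues of $\hat A$ are $re^{\pm\ii\theta}$ and $r^{-1}e^{\pm\ii\phi}$. This gives
$$\delta=r\cos\theta+r^{-1}\cos\phi,\qquad \gamma-\delta^2=r^2+r^{-2}+\text{(cross terms)},$$
and $\beta=\delta$ holds exactly when the eigenvalue multiset is symmetric enough. Computing shows $\beta-\delta\propto(r-r^{-1})(\cos\theta-\cos\phi)$. So $\beta\ne\delta$ forces $r\ne1$, hence loxodromic, and also $\theta\neq\phi$. The latter is the $3$-simple case, since such a map is neither a real matrix nor a product of two real matrices: that would force a rotation--dilation commuting structure with $\theta=\phi$ or $r=1$.

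When $\beta=\delta$, either $r=1$ or $\theta=\phi$. If $\theta=\phi$ in the loxodromic case (or $\theta=\pm\phi$ with the right sign in the elliptic case), the map is conjugate to a real matrix, i.e.\ $1$-simple. I would verify that this is exactly the condition $\sigma=1,\tau\in\R$, by computing $\sigma,\tau$ on the normal forms and checking their conjugation behaviour: $\sigma$ and $\tau$ transform by similarity $q\mapsto uqu^{-1}$, so the conditions $\sigma=1$ and $\tau\in\R$ are invariant. In that case the trichotomy reduces to the classical $SL(2,\R)$ trace test on $\delta$ ($\tau=2\delta$ is the real trace). Otherwise the map is $2$-simple, and one computes $\gamma-\delta^2$ on the normal forms:
\begin{enumerate}
\item for the elliptic form with $r=1$ it equals $2-2\cos\theta\cos\phi+\dots<2$;
\item for the parabolic form it equals exactly $2$;
\item when $r\ne1$ it exceeds $2$, by $r^2+r^{-2}>2$.
\end{enumerate}

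The main obstacle will be the invariance and case analysis for $\sigma$ and $\tau$, whose piecewise definitions are not manifestly conjugation invariant. I would first try to show that $\sigma$ and $\tau$ are, up to similarity, the coefficients of the quaternionic quadratic satisfied by the fixed points ($cac^{-1}$ arises from moving $c$ past $a$). That interpretation would reduce invariance to the invariance of the fixed-point equation under conjugation. The $3$-simple bound and the exact identification of $1$-simple maps with real-conjugate matrices would be taken from the structure of products of involutions.
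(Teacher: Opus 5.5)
The paper does not prove this proposition; it quotes it from Parker--Short, so your argument has to stand on its own. The $\beta,\gamma,\delta$ part works. The complex $4\times4$ representation has characteristic polynomial exactly $x^4-2\delta x^3+\gamma x^2-2\beta x+\alpha$, so these quantities are conjugacy invariants simply because $\widehat{PAP^{-1}}=\hat P\hat A\hat P^{-1}$. Your claim that $\hat A$ and $\hat A^{-1}$ are isospectral is false in general: the polynomial is self-reciprocal exactly when $\beta=\delta$, which is the dividing line between cases 2 and 3. On the normal forms one does get $\beta-\delta=(r-r^{-1})(\cos\phi-\cos\theta)$.

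\textbf{The genuine gap} is identifying case 1 with ``conjugate into $SL(2,\R)$''. Your mechanism is that conjugating $A$ changes $(\sigma,\tau)$ by a simultaneous similarity $q\mapsto uqu^{-1}$. This is false. $A=\operatorname{diag}(e^{\ii\theta},e^{\ii\phi})$ has $\sigma=e^{\ii(\theta+\phi)}$, while its conjugate by $\operatorname{diag}(1,\jj)$, namely $\operatorname{diag}(e^{\ii\theta},e^{-\ii\phi})$, has $\sigma=e^{\ii(\theta-\phi)}$, which has a different real part. Your fixed-point fallback fails for the same reason. $A$ is indeed conjugate to $\begin{pmatrix}0&-\sigma\\1&\tau\end{pmatrix}$ (for $c\neq0$ via $\begin{pmatrix}c&-cac^{-1}\\0&1\end{pmatrix}$), whose fixed points solve $x^2+x\tau+\sigma=0$. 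But a single conjugacy class contains many such forms with non-similar coefficients.

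That conjugation does prove ``$\sigma=1,\ \tau\in\R\Rightarrow$ 1-simple'', with $\tau=\delta$, not $2\delta$. The converse is never proved: no conjugate of a real matrix has $\sigma\neq1$ or $\tau\notin\R$. Case 2 depends on it. Both $\operatorname{diag}(\lambda,\lambda^{-1})$ with $\lambda$ real and $\operatorname{diag}(e^{\ii\theta},e^{\ii\theta})$ satisfy $\beta=\delta$ and $\gamma-\delta^2=2$. So if a conjugate of either fell into case 2, part 2(b) would call it parabolic.

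Your stated inequalities gloss over this. The true values are $\gamma-\delta^2=2-(\cos\theta-\cos\phi)^2$ for $r=1$, and $(r^2+r^{-2})\sin^2\theta+2\cos^2\theta$ for $r\neq1$, $\theta=\phi$. These lie strictly on the correct side of $2$ only once $\theta=\phi$ (respectively $\sin\theta=0$) is excluded.

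One way to close the gap uses $B=\begin{pmatrix}0&-\sigma\\1&\tau\end{pmatrix}$ carrying the invariants of a real matrix of trace $t$. Write $\sigma=\sigma_0+s$ and $\tau=t+v$ with $s,v$ purely imaginary. The invariants give $|\sigma|=1$, $|v|^2=2(1-\sigma_0)$ and $-\Re(sv)=t(1-\sigma_0)$.
\begin{itemize}
\item If $|t|\ge2$, Cauchy--Schwarz forces $\sigma=1$ and $v=0$.
\item If $|t|<2$, a non-identity real elliptic fixes a $2$-sphere. Then $x^2+x\tau+\sigma=0$ has infinitely many roots, so its coefficients are real, which forces $\sigma=1$.
\end{itemize}

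There are two further problems. First, a loxodromic normal form with $\theta=\phi$ is conjugate to a real matrix only when $\theta\in\{0,\pi\}$. For instance $\operatorname{diag}(r\ii,r^{-1}\ii)$ has $\delta=0$ and $\sigma=-1$, and it is 2-simple. With your split, the ``$r\neq1$'' branch of case 2 would be empty.

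Second, the $k$-simplicity claims are not established. The upper bounds follow by factoring the normal forms into $\operatorname{diag}(r,r^{-1})$ and $\operatorname{diag}(e^{\ii\psi},e^{\pm\ii\psi})$, each of which is conjugate into $SL(2,\R)$. The lower bounds need an actual argument, above all that $\beta\neq\delta$ excludes every product of two simple matrices. ``Not a product of two real matrices'' is the wrong test, since the two factors may be conjugated into $SL(2,\R)$ by different elements. Deferring to ``products of involutions'' leaves this open.
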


We note that if $A$ is a Clifford matrix, then the notion of elliptic, parabolic, and loxodromic here all have the same meaning in Parker and Short's work as they do in ours.

\begin{example}Cao, Parker, and Wang \cite{cao2004classification} provided a classification for quaternionic matrices similar to Proposition \ref{prop:parker} above, but for those matrices which preserve the unit ball. Their work can be translated into the situation we are studying by means of the matrix $\begin{pmatrix}\kk/2 & 1/2 \\ -1 & -\kk \end{pmatrix}$, which maps the unit ball bijectively onto the upper-half space. Example 4.3 in \cite{cao2004classification} shows that the matrix $\begin{pmatrix}1 & (\ii+\jj)/2\\ -2\ii-2\jj & 3 \end{pmatrix}$ is loxodromic and fixes the points $\frac{-1\pm \sqrt{3}}{4} (\ii+\jj)$. These both have periodic CF expansions:
\begin{align*}
\frac{-1+\sqrt{3}}{4}(\ii+\jj)&= [3(\ii+\jj),\overline{-2(\ii+\jj),4(\ii+\jj)}]\\
\frac{-1-\sqrt{3}}{4}(\ii+\jj)&= [-1(\ii+\jj),\overline{2(\ii+\jj),-4(\ii+\jj)}].
\end{align*}
\end{example}

\begin{example}
In the opposite direction, let us consider a matrix that fixes a known point with a periodic continued fraction expansion and show that it is indeed loxodromic. We will consider the matrix
\[M=
\begin{pmatrix}
0 & -1 \\
1 & -a 
\end{pmatrix}
=
\begin{pmatrix}
0 & -1 \\
1 & 0 
\end{pmatrix}
\begin{pmatrix}
1 & -a \\
0 & 1 
\end{pmatrix},
\]
where $a$ is an element of the digit set for the three-dimensional continued fraction, satisfying $\|a\|>(4+\sqrt{3})/2$. As in Pringsheim's Theorem, the size requirement guarantees that the matrix maps $K$ fully into $K$, guaranteeing convergence of the periodic CF $[\overline{a}]$. Furthermore, it is easy to see $M$ fixes $[\overline{a}]$. Now we employ Proposition \ref{prop:parker} above. To begin with, we quickly see that $\alpha = 0|a|^2+1-0=1$, so the matrix is in $SL(2,\Hyp)$. Also $\beta = \Re((0\cdot -a+1)0+(-a\cdot 0 +1) \overline{-a}) = -\Re(a)$ and $\delta =\Re(0-a)=-\Re(a)$, so $\beta=\delta$. In addition, $\sigma = 1\cdot 0 \cdot 1^{-1} \cdot-a+1 = 1$ and $\tau = 1\cdot 0 \cdot 1^{-1} - a=-a$. If $a$ is real, so that we are in case $1$ of the proposition, we must have $|a|\ge 3$, and hence $\delta^2 >4$, so our matrix is 1-simple loxodromic. If $a$ is not real, we are in case 2 of the proposition and here $\gamma=|0-a|^2+2\Re(0\cdot -a +1) = |a|^2+2$. Thus $\gamma-\delta^2 = |a|^2+2-\Re(a)^2$, and since $|a|^2> \Re(a)^2$ since $a$ is not real, we have that $\gamma-\delta^2>2$ and so $A$ is 2-simple loxodromic, as desired.
\end{example}

\subsection{Loxodromic fixed points in $\R^4$}\label{sec:R4}
We now consider continued fractions in $\R^4$, with associated Clifford algebra
\[
A_3 = &\{a_0+a_1e_1+a_2e_2+a_3e_3+a_{12}e_1e_2\\\notag&+a_{13}e_1e_3+a_{23}e_2e_3+a_{123}e_1e_2e_3:a_I\in \R\}.
\]
As before, we identify $\R^4$ with the subspace $\{a_0+a_1e_1+a_2e_2+a_3e_3: a_i \in \R\}$. Lastly, denote by $\Z^8$ the subgroup of $A_3$ where all coefficients are in $\Z$.

\begin{remark}
CF algorithms on $\R^4$ include the quaternionic CF algorithms (see e.g.~\cite{lukyanenko2022convergence} and references therein), just as CF algorithms on $\R^2$ include complex CF algorithms. We will not explicitly discuss quaternionic CFs here to avoid confusion with the Clifford algebras involved. Our choice of lattice and inversion below will reflect our interest in quaternionic CFs.
\end{remark}

Because CF algorithms with lattice $\Z^4$ are not proper, we will be interested in CF algorithms with the Hurwitz integer lattice $\mathcal{H}=\Z^4+h \Z^4$, where $$h=(1+e_1+e_2+e_3)/2.$$ Denote by $\Gamma_\mathcal{H}$ the multiplicative group in $A_3$ generated by $\mathcal{H}\setminus \{0\}$. We first show that $\Gamma_\mathcal{H}$ is discrete. 

\begin{lemma} The group $\Gamma_\mathcal{H}$ is discrete; indeed, 
$\Gamma_\mathcal{H}\cup\{0\}\subset \mathbb{Z}^8 +h\mathbb{Z}^8$.
\end{lemma}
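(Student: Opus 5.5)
The plan is to show that $R:=\mathbb{Z}^8+h\mathbb{Z}^8$ is a subring of $A_3$ that contains $\mathcal{H}$ and is a discrete additive subgroup of $A_3\cong\R^8$. Every element of $\Gamma_\mathcal{H}$ is a finite product of nonzero Hurwitz integers. I read $\Gamma_\mathcal{H}$ in this sense, as the set of such products, in the spirit of $\Gamma_3(\Z)$ earlier. Granting that reading, $\Gamma_\mathcal{H}\cup\{0\}\subset R$ follows at once, and discreteness of $\Gamma_\mathcal{H}$ is inherited from $R$. Note that inverses of non-units such as $2$ cannot lie in $R$, so the products reading is the one that makes the statement correct.

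\textbf{Discreteness and containment.} These are immediate. $R$ is generated as an abelian group by the $16$ elements $m$ and $hm$, where $m$ runs over the basis monomials $e_I$. Since $h\in\frac12\mathbb{Z}^8$ and products of monomials are $\pm$ monomials, all of these generators lie in $\frac12\mathbb{Z}^8$. Hence $R\subset\frac12\mathbb{Z}^8$, which is a lattice in $A_3$, so $R$ is discrete. Also $\mathcal{H}=\mathbb{Z}^4+h\mathbb{Z}^4\subset R$.

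\textbf{Closure under multiplication.} This is the only real computation, and it reduces to two identities.
\begin{enumerate}
\item[(i)] $h^2=h-1$. Because the $e_i$ anticommute, $(e_1+e_2+e_3)^2=-3$. So $(1+e_1+e_2+e_3)^2=1+2(e_1+e_2+e_3)-3$, and dividing by $4$ gives $(-1+e_1+e_2+e_3)/2=h-1$.
\item[(ii)] $mh-hm\in\mathbb{Z}^8$ for every monomial $m$. Each $e_i$ either commutes or anticommutes with $m$, so $e_im-me_i$ is $0$ or $2e_im$. Therefore $mh-hm=\frac12\sum_i(me_i-e_im)$ has integer coefficients.
\end{enumerate}
From (ii), $\mathbb{Z}^8h\subset h\mathbb{Z}^8+\mathbb{Z}^8$. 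Now expand a general product $(x+hy)(x'+hy')$ with $x,y,x',y'\in\mathbb{Z}^8$. The terms $xx'$ and $hyx'$ are already in $R$. The terms $xhy'$ and $hyhy'$ are handled by moving $h$ leftward past integral elements using (ii), at the cost of elements of $\mathbb{Z}^8$. This leaves expressions of the form $h^2z$ with $z\in\mathbb{Z}^8$, which (i) rewrites as $hz-z\in R$. Hence $RR\subset R$.

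\textbf{Conclusion and main obstacle.} $R$ is a discrete ring containing $\mathcal{H}\setminus\{0\}$, so it contains all finite products of such elements, which gives $\Gamma_\mathcal{H}\cup\{0\}\subset R$ and discreteness. The main obstacle is the commutation step (ii): one has to check that moving $h$ past a monomial changes things only by an integral correction. That is exactly where the factor $\frac12$ could break integrality, and the sign analysis of $e_im$ versus $me_i$ is what resolves it.
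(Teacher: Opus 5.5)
Your proof is correct and follows essentially the paper's route. Like the paper, you reduce to showing $\mathbb{Z}^8+h\mathbb{Z}^8$ is closed under multiplication via $h^2\equiv h$ and $\alpha h\equiv h\alpha$ modulo $\mathbb{Z}^8$; the paper checks the latter only on the generators $e_i$, you check it on all monomials, and your containment $R\subset\frac12\mathbb{Z}^8$ makes explicit the discreteness step the paper leaves implicit. Your reading of $\Gamma_\mathcal{H}$ as finite products of nonzero Hurwitz integers is exactly what the paper's argument proves, and your caveat is justified, since the literally generated group would contain $1/2\notin\mathbb{Z}^8+h\mathbb{Z}^8$.
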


\begin{proof}
It suffices to show that $\mathbb{Z}^8 +h\mathbb{Z}^8$ is closed under multiplication. Write $a \equiv b$ if $a-b\in \Z^8$. It is straightforward to check that $h^2\equiv h$; that for each $i$ one has $e_i h \equiv h e_i$; and thus that $h\alpha \equiv \alpha h$ for any $\alpha \in \Z^8$. One then computes, for $\alpha_1+h\beta_1, \alpha_2+h\beta_2\in \Z^8 + h\Z^8$,
\begin{align*}
(\alpha_1+h\beta_1)(\alpha_2+h\beta_2)
& \equiv h\beta_1\alpha_2+\alpha_1h\beta_2+h\beta_1h\beta_2\\
&\equiv h\beta_1\alpha_2 + (h\alpha_1+\gamma_1)\beta_2+h(h\beta_1+\gamma_2)\beta_2 \\
&\equiv h (\beta_1\alpha_2+\alpha_1\beta_2+\beta_1\beta_2 +\gamma_2\beta_2),
\end{align*}
where $\gamma_1=\alpha_1h-h\alpha_1$, $\gamma_2=\beta_1 h-h\beta_1$ are both in $\Z^8$. This shows the product $(\alpha_1+h\beta_1)(\alpha_2+h\beta_2)$ is again in $\Z^8+h\Z^8$.
\end{proof}

\begin{lemma}
Let $\mathcal M$ be the group of matrices generated by the inversion $(x,y,z,w)\mapsto \frac{(-x,y,z,w)}{x^2+y^2+z^2+w^2}$ and translations in $\mathcal H$. Then $\mathcal M$ is the group $SL(2,\Gamma_\mathcal{H})$ consisting of Clifford matrices with entries in $\Gamma_\mathcal{H}$.
\begin{proof}
As in the proof of lemma 4.6, it suffices to write  the diagonal matrices 
$\begin{pmatrix}
a & 0 \\
0 & a^{-1} 
\end{pmatrix}$ in terms of the generators of $\Mod$, where $a\in \Gamma_\mathcal{H}$ has $\norm{a}=1$. The group $\Gamma_\mathcal{H}$ is the multiplicative group generated by the non-zero elements of $\mathcal H$. Since norm is multiplicative and non-zero elements of $\mathcal{H}$ have norm at least 1, it suffices to consider $a\in \Gamma_\mathcal{H}$ that can be written as products of unit-norm elements of $\mathcal H$. Furthermore, it suffices to simply consider unit-norm elements of $\mathcal H$. For unit-norm elements of $\mathcal H$, lemma \ref{lemma:hiddenSymmetries}.
\end{proof}
\end{lemma}

We conclude by mentioning that identifying elliptic, parabolic, or loxodromic matrices in $SL(2,\Gamma_\mathcal{H})$ is quite involved, see Wang et al \cite{wang1998note}.

\section{A note on quaternionic identities}
\label{sec:identities}
In working with this paper, we observed a generalization of the identity $\ii+\frac{1}{\ii}=0$  over the quaternions, which is easily verified:

\begin{prop}
Let $p=\ii+\jj$ and $q=\ii+\jj+\kk$. Then $p+\frac{1}{p+1/p}=0$ and \[q+\cfrac{1}{q+\cfrac{1}{q+\cfrac{1}{q+\cfrac{1}{q}}}}=0.\] The identities remain true if the coefficients of $\ii, \jj,\kk$ are negated or permuted.
\end{prop}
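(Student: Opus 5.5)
The plan is a direct computation in the quaternions, using only the rule $u^{-1}=\overline u/\norm{u}^2$ and the fact that a pure imaginary quaternion $v$ satisfies $v^2=-\norm{v}^2$.

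For $p=\ii+\jj$, first note that $p$ is pure imaginary with $\norm{p}^2=2$. Hence $1/p=-p/2$, and so $p+1/p=p/2$. Then
\[
\frac{1}{p+1/p} = \left(\frac{p}{2}\right)^{-1} = 2\cdot\frac{-p}{2} = -p,
\]
so $p+\frac{1}{p+1/p}=0$.

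For $q=\ii+\jj+\kk$, the continued fraction is evaluated from the inside out. At every stage the quantity is a real multiple of $q$, so everything commutes and one may compute in $\R[q]\cong\C$ with $q^2=-3$. Set $x_1=q$ and $x_{k+1}=q+1/x_k$. Writing $x_k=c_kq$ with $c_k$ real gives $1/x_k=-q/(3c_k)$, and therefore
\[
c_{k+1}=1-\frac{1}{3c_k}.
\]
Starting from $c_1=1$ this yields $c_2=2/3$, $c_3=1/2$, $c_4=1/3$, and $c_5=0$. The displayed expression is exactly $x_5$, so it equals $0$.

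For the final sentence of the statement, negating or permuting the coefficients of $\ii,\jj,\kk$ replaces $p$ or $q$ by another pure imaginary quaternion of the same norm, either $2$ or $3$. The computation above used only that $p$ (resp.\ $q$) is pure imaginary of that norm, so it goes through unchanged. Equivalently, $v\mapsto -v$ and permutations of $\ii,\jj,\kk$ are realized by orthogonal maps of $\Im\Hyp$ that fix $1$, and the computation is invariant under these.

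No step is a real obstacle. The only point needing care is justifying commutativity, which holds because all intermediate values lie in the commutative subalgebra $\R[q]$. I will also double-check the count of five levels in the nested fraction against the recursion.
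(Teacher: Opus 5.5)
Your proof is correct and takes essentially the paper's approach: the paper only says the identities are ``easily verified'' by direct computation, and your reduction to the real recursion $c_{k+1}=1-\frac{1}{3c_k}$ is exactly the recursion $x_{n+1}=1-\frac{1}{(d-1)x_n}$ (with $d=4$) that the paper uses in the following proposition. Your main justification of the last sentence, that only pure-imaginarity and the norm enter, is the right one; the ``equivalently'' aside needs a caveat, since a single sign change or a transposition of $\ii,\jj,\kk$ is an anti-automorphism rather than an automorphism of the quaternions, but this is harmless because the expressions involve only sums and inverses.
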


No analogous identities for this inversion hold in higher dimensions. That is, fix $d>0$ and consider the space $\R^d$ with the inversion $\iota(x_1, \ldots, x_d)=\frac{(x_1, -x_2, \ldots, -x_d)}{x_1^2+\ldots+x_d^2}$. Let $a=(0,1,1, \ldots, 1)\in \R^d$.  The proposition states that for the complex numbers ($d=2$) we have $a+[a]=0$; in $\R^3$, we have $a+[a,a]=0$; and in $\R^4$, we have $a+[a,a,a,a]=0$.

\begin{prop}
For $d>4$, $a+[a,\ldots, a]\neq 0$ for arbitrary-length continued fractions.
\begin{proof}
Consider a CF $a+[a, \ldots, a]$ of length $n$. Observe that the first digit is 0, and the remaining digits are all equal to one another; we will call this value $x_n$.  Adding an additional CF digit to $a+[a,\ldots, a]$ changes $x_n$ by
$$x_{n+1}=1+\frac{-x_n}{\Norm{(0, x_n, \ldots, x_n)}^2}=1+\frac{-1}{(d-1)x_n}.$$
For $d=5$, one shows directly that $x_n=\frac{n+1}{2n}\neq 0$. When $d\geq 5$, this dynamical system has fixed points $x_\pm =\frac{1\pm \sqrt{1-4/(d-1)}}{2}$. For $d>5$, the interval $I=[x_-, x_+]$ is non-degenerate, and is mapped to itself under the dynamical system. In particular, since we have $x_1=1\in I$, we cannot have that $x_n=0$ for any $n$. 
\end{proof}
\end{prop}

\bibliographystyle{plain}
\bibliography{bibliography}

\end{document}